\documentclass[11 pt,leqno]{amsart}

\usepackage{amsmath}
\usepackage{amssymb}
\usepackage{amsfonts}
\usepackage{amsthm}
\usepackage{hyperref}
\usepackage{enumerate}

\usepackage{bbm}
\usepackage[mathscr]{eucal}

\makeatletter
\def\subsection{\@startsection{subsection}{2}%
  \z@{.5\linespacing\@plus.7\linespacing}{-.5em}%
  {\normalfont\itshape}}
\makeatother

\makeatletter
\def\author@andify{%
  \nxandlist{\unskip\penalty-1\;\;\space\ignorespaces}%
           {\unskip\penalty-1\;\;\space\ignorespaces}%
           {\unskip\penalty-2\;\;\space\ignorespaces}%
}

\makeatother

\usepackage{tikz}
\usetikzlibrary{intersections}
\usetikzlibrary{calc}
\usepackage{bbm}
\usepackage{color}\usepackage{enumitem}
\usepackage[mathscr]{eucal}

\theoremstyle{plain}
\newtheorem{thm}{Theorem}[section]
\newtheorem{lemma}[thm]{Lemma}

\newtheorem{proposition}[thm]{Proposition}

\theoremstyle{remark}
\newtheorem{remark}[thm]{Remark}

\numberwithin{equation}{section}

\newcommand{\inn}[2]{\langle#1,#2\rangle}

\newcommand{\Pthreex}[1]{(\d-1)/(\d-1+#1)}
\newcommand{\Pthreey}{0}
\newcommand{\Pfourx}[1]{(\d-1)/(\d-1+#1)}
\newcommand{\Pfoury}[1]{(1-\d)/(\d-1+#1)}

\newcommand{\Aonex}{(\d-1)/(\d)}
\newcommand{\Aoney}{(1-\d)/(\d)}
\newcommand{\Atwox}[1]{(\d-1)/(\d-1+#1)}
\newcommand{\Atwoy}[1]{(1-\d)/(\d-0.75+#1)}

\newcommand{\definecoords}{
\def\ptsize{.1pt}
\def\WbgFillOpacity{.2}
\def\WbgFillColor{gray}
\def\WbgLineOpacity{.6}
\def\WbgDrawCritSeg{1}
\def\WbgCritSegStyle{solid}

\def\WbbFillOpacity{.1}
\def\WbbLineOpacity{.5}
\def\AFillColor{white}
\def\AFillOpacity{0}
\def\ALineOpacity{.8}
\coordinate (P1) at (0,0);
\coordinate (P2) at (0,1);
\coordinate (P3) at ( {\Pthreex{\b}}, { \Pthreey } );
\coordinate (P4g) at ( { \Pfourx{\b} }, { \Aoney } );
\coordinate (P4) at ( { \Pfourx{\b} }, { \Pfoury{\b} } );
\coordinate (A1) at ( { \Aonex }, { \Aoney } );
\coordinate (A2) at ( { \Atwox{\b} }, { \Atwoy{\b} } );
\coordinate (A3) at (1,0);
\coordinate (A4) at (0,1);
\coordinate (C1) at ( { (\Pfourx{\b}+2*\Aonex)/3 }, {(\Pfoury{\b}+2*\Aoney)/3} );
\coordinate (C2) at ( { \Pfourx{\b} }, {(\Pfoury{\b}+\Aoney)/2} );
}

\newcommand{\drawauxlines}[1]{
\draw (0,-1) [->] -- (0,1.2) node [left] {\tiny $\frac{\alpha}{(d-1)p}$};
\draw (0,0) [->] -- (1.2,0) node [below] {\tiny $\frac1p$};
\draw[dashed,opacity=.3] (0,0) -- (1,-1);
\draw[dashed,opacity=.3] (0,1) -- (#1);
\draw[dashed,opacity=.3] ({\Aonex},1) -- ({\Aonex},-1);
\draw[dashed,opacity=.3] ({\Pthreex{\b}},1) -- ({\Pthreex{\b}},-1);
\draw[dashed,opacity=.3] (0,-{\Aonex}) -- (1,-{\Aonex});
\draw[dashed,opacity=.3] (0,-{\Pthreex{\b}}) -- (1,-{\Pthreex{\b}});
}

\newcommand{\drawWbg}{
\fill ({\Aonex},0) node [below left] {\tiny $x_1$} circle [radius=0.015em];
\fill (P3) node [below right] {\tiny $x_\beta$} circle [radius=0.015em];
\fill (A3) node [below] {\tiny $1$} circle [radius=0.015em];
\fill (A4) node [left] {\tiny $1$} circle [radius=0.015em];
\fill (0,-{\Aonex}) node [left] {\tiny $-x_1$} circle [radius=0.015em];
\fill (0,-{\Pthreex{\b}}) node [left] {\tiny $-x_\beta$} circle [radius=0.015em];
\fill[color=\WbgFillColor,opacity=\WbgFillOpacity] (P1) -- (A1) .. controls (C1) and (C2) .. (A2) -- (P3) -- (P2) -- cycle;
\draw[opacity=\WbgLineOpacity] (P1) -- (A1) .. controls (C1) and (C2) .. (A2) -- (P3) -- (P2) -- cycle;
}

\newcommand{\drawA}{
\fill[color=\AFillColor,opacity=\AFillOpacity] (A1) .. controls (C1) and (C2) .. (A2) -- (P4) -- cycle;
\draw[opacity=\ALineOpacity] (A1) .. controls (C1) and (C2) .. (A2) -- (P4) -- cycle;
}

\begin{document}

\subjclass[2020]{42B25, 28A80}

\keywords{Spherical maximal functions, weighted norm inequalities, restricted dilation sets, fractal dimensions, Legendre--Assouad function}

\title[Power weight inequalities for spherical maximal functions]{Power weight inequalities for spherical maximal functions}

\author[ M. Fraccaroli ]{Marco Fraccaroli}
\address{M.F., Department of Mathematics and Statistics, University of Massachusetts Lowell, Lowell, MA, USA}
\email{marco\_fraccaroli@uml.edu}

\author[ J. Roos ]{Joris Roos}
\address{J.R., Department of Mathematics and Statistics, University of Massachusetts Lowell, Lowell, MA, USA}
\email{joris\_roos@uml.edu}

\author[ A. Seeger ]{Andreas Seeger}
\address{A.S., Department of Mathematics, University of Wisconsin--Madison, Madison, WI, USA}
\email{aseeger@wisc.edu}

\thanks{J.R. was supported in part by NSF grant DMS-2154835 and a Simons Foundation grant.
A.S. was supported in part by NSF grant DMS-2348797.}

\begin{abstract}
This paper is about
spherical maximal functions with general dilation sets acting on functions in weighted
$L^p(|x|^\alpha)$ spaces. Aside from endpoint cases, a complete description of the allowable ranges of $p$, $\alpha$ is given in terms of
the Legendre--Assouad function of the dilation set. This settles, up to endpoints, an open problem of Duoandikoetxea and Seijo.
\end{abstract}

\maketitle

\newcommand{\todo}[1]{{\color{blue}ToDo: #1}}

\section{Introduction}
Let $d\ge 2$. For a locally integrable function $f:{{\mathbb{R}}}^d\to {{\mathbb{C}}}$ the average over the sphere $S^{d-1}$ of radius $t$ centered at $x\in {{\mathbb{R}}}^d$ is given by
\[A_tf(x) =f*\sigma_t(x)=\int f(x+t{\omega}) d\sigma({\omega}),\]
where $\sigma$ is the normalized surface measure and the dilate $\sigma_t$ is defined by $\inn{\sigma_t}{f}=\inn{\sigma}{f(t\cdot)}$. The average $A_tf(x)$ is well-defined for almost every $x\in {{\mathbb{R}}}^d$. Given a set ${{\mathcal{E}}}$ of radii in $(0,\infty)$, the maximal function $M_{{\mathcal{E}}}$ is given by
\[M_{{\mathcal{E}}} f(x)= \sup_{t\in {{\mathcal{E}}}} |A_t f(x)|\]
which yields a measurable function at least for continuous $f$. If ${{\mathcal{E}}}$ has an accumulation point at $0$, then size estimates for the maximal operator can be used to prove pointwise convergence results for $A_t f(x)$ as $t\to 0$ within ${{\mathcal{E}}}$.
For the full spherical maximal operator, corresponding to ${{\mathcal{E}}}=(0,\infty)$, $L^p({{\mathbb{R}}}^d)$ boundedness holds if and only if $p>1+\frac{1}{d-1}$; this is due to Stein \cite{SteinPNAS1976} for $d\ge 3$ and to Bourgain \cite{BourgainJdA1986} for $d=2$.

The case of general ${{\mathcal{E}}} \subset (0,\infty)$ was considered in \cite{SeegerWaingerWright1995}, where it was shown that the boundedness range depends on a variant of Minkowski dimension.
Equip $(0,\infty)$ with the metric
\begin{equation}\label{eqn:metric}
d_\times(s,t) = |\log_2 (s/t)|.
\end{equation}
We denote the diameter of a set $J\subset (0,\infty)$ by $|J|_{\times}$.
Note that dilations $t\mapsto \lambda t$ with $\lambda>0$ are isometries on $(0,\infty)$, so that e.g. $|[R,2R]|_{\times}=1$ for all $R>0$.
For a set $E\subset (0,\infty)$ with bounded diameter we denote by $N(E,\delta)$ the {\em entropy numbers}, i.e. for each $\delta>0$ the minimum number of intervals of diameter $\delta$ required to cover $E$ (where diameter is always taken with respect to $d_\times$).

With ${{\mathcal{E}}}\subset (0,\infty)$ fixed, define
\[ \beta = \beta_{{\mathcal{E}}} = \limsup_{\delta\to 0} \frac{\sup_{|J|_\times= 1} \log N({{\mathcal{E}}}\cap J, \delta)}{\log (\delta^{-1}) }, \]
where the supremum is taken over intervals $J\subset (0,\infty)$ of diameter one.
This is a natural extension of the standard (upper) Minkowski dimension to the case of unbounded sets; indeed, $\beta$ is equal to the Minkowski dimension if ${{\mathcal{E}}}$ has bounded diameter (with respect to $d_\times$).
The result in \cite{SeegerWaingerWright1995} states that
$M_{{\mathcal{E}}} $ is bounded on $L^p$ if
\[p>p_\beta=1+\tfrac{\beta}{d-1}\]
and unbounded if $p<p_\beta$
(see also \cite{STW-jussieu}, \cite{RoosSeeger-problems} for the discussion of various endpoint problems).

In this paper we are interested in power weight inequalities of the form
\begin{equation} \label{eq:weighted}
\int |M_{{\mathcal{E}}} f(x)|^p |x|^\alpha\;dx {\lesssim} \int |f(y)|^p |y|^\alpha\;dy.
\end{equation}
Thus for the weight $w_\alpha(x)=|x|^\alpha$ we ask for which $p\in [1,\infty]$ and which $\alpha\in {{\mathbb{R}}}$ the sublinear operator $M_{{\mathcal{E}}}$ maps $L^p(w_\alpha)$ to itself.
Our main result answers this question for every given ${{\mathcal{E}}}\subset (0,\infty)$ up to endpoint cases in the parameters $p$ and $\alpha$.

More precisely, we will determine the closure of the ``type set''
\begin{equation}\label{eq:fW}
{{\mathfrak{T}}}_{{\mathcal{E}}}=\{ (\tfrac 1p, \tfrac \alpha p)\in [0,1]\times \mathbb{R} \;: \; M_{{\mathcal{E}}} \text{ is bounded on } L^p(w_\alpha) \},
\end{equation}
which is in one-to-one correspondence with the closure of the set of all $(\tfrac 1p,\alpha)$ for which $M_{{\mathcal{E}}}$ is bounded on $L^p(w_\alpha)$.
Observe that ${{\mathfrak{T}}}_{{\mathcal{E}}}$ is convex (for the short proof see \S \ref{sec:typesetconvex}).

To formulate our results we need
another quantification of dimension besides $\beta$, called the {\it Legendre--Assouad function} which was first introduced in \cite{BeltranRoosSeeger-radial, BeltranRoosRutarSeeger} for subsets of $[1,2]$.
For $\rho\in\mathbb{R}$ we let
\begin{equation}\label{eq:LAglob}
\nu^\sharp (\rho)= \nu^\sharp_{{\mathcal{E}}}(\rho)= \limsup_{\delta\to 0} \frac{\log\big(\sup_{\delta\le |J|_\times\le 1} |J|_\times^{-\rho} N({{\mathcal{E}}}\cap J, \delta)\big)}{ \log(\delta^{-1})},
\end{equation}
where the supremum is taken over intervals $J\subset (0,\infty)$ of $d_\times$-diameter in $[\delta,1]$.
This is a natural extension of the original definition from \cite{BeltranRoosSeeger-radial, BeltranRoosRutarSeeger} to unbounded sets.
One can show (\cite[\S2]{BeltranRoosRutarSeeger}) that $\nu^\sharp$ equals the Legendre transform of the function
\[\nu(\theta)=-(1-\theta)\dim_{\mathrm A,\theta}{{\mathcal{E}}},\]
where $\theta\mapsto \dim_{\mathrm A,\theta}{{\mathcal{E}}}$ is the Assouad spectrum of ${{\mathcal{E}}}$ with respect to the metric $d_\times$ (see Fraser--Yu \cite{FraserYu2018Adv}, Fraser \cite{FraserBook}).

Recall from \cite{BeltranRoosRutarSeeger} that $\nu^\sharp$ is convex and increasing, and $\nu^\sharp(\rho)=\beta$ for $\rho\le 0$.
Now consider the generalized inverse of the increasing function $\nu^\sharp$ which is defined for $s\ge \beta$ as
\[(\nu^\sharp)^\dagger(s) = \sup \{ \rho\ge 0\,:\,\nu^\sharp(\rho)\le s\}.\]
For $p\ge p_\beta=1+\frac{\beta}{d-1}$ let
\begin{align} U(p)&= (d-1)(p-1)-\beta,
\\L(p) &=(d-1)(p-2)- (\nu^\sharp)^{\dagger} \big((d-1)(p-1) \big).
\end{align}

\begin{thm}\label{thm:main}
Let $d\ge 2$, ${{\mathcal{E}}}\subset (0,\infty)$ nonempty. Then
\begin{equation}\label{eq:typesetexplicit} \overline{{{\mathfrak{T}}}_{{\mathcal{E}}}} = \{ \big(\tfrac 1p,\tfrac\alpha p\big )\in [0,1]\times \mathbb{R}\,:\, p\ge p_\beta,\,\, L(p)\le \alpha\le U(p)\}.
\end{equation}
\end{thm}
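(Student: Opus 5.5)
The proof has two halves: necessary conditions, obtained from explicit test functions that show $\overline{{\mathfrak{T}}_{\mathcal{E}}}$ is contained in the right-hand side of \eqref{eq:typesetexplicit}, and sufficient conditions, obtained by proving boundedness at enough points and using convexity of ${\mathfrak{T}}_{\mathcal{E}}$ (\S\ref{sec:typesetconvex}) to fill in the region. The necessary condition $p\ge p_\beta$ should follow from the unweighted counterexample of \cite{SeegerWaingerWright1995}. That example is localized to $|x|\approx 1$, where $w_\alpha\approx 1$, and it can be moved to scale $R$ by dilation, since ${\mathcal{E}}$ is only controlled on intervals $J$ with $|J|_\times=1$.

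\emph{Upper bound $\alpha\le U(p)$.} I would take $f=\mathbbm 1_{B(0,\delta R)}$, where $R$ and $J=[R,2R]$ are chosen so that $N({\mathcal{E}}\cap J,\delta)\approx\delta^{-\beta+\epsilon}$. For $|x|=t\in {\mathcal{E}}\cap J$, and more generally for $x$ in the $\delta R$-neighborhood of such spheres, we have $A_tf(x)\gtrsim\delta^{d-1}$. By scaling we may take $R=1$. The left side of \eqref{eq:weighted} is then $\gtrsim \delta^{(d-1)p}\,\delta^{1-\beta+\epsilon}$, while the right side is $\approx\delta^{d+\alpha}$. Letting $\delta\to0$ forces $\alpha\le (d-1)(p-1)-\beta$.

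\emph{Lower bound $\alpha\ge L(p)$.} Here the singularity of the weight at the origin must be tested at the dual end. I would fix an interval $J$ with $|J|_\times=\lambda\in[\delta,1]$ near $1$ and let $f$ be the indicator of $\{y: ||y|-t|<\delta \text{ for some } t\in{\mathcal{E}}\cap J\}$. Its $L^p(w_\alpha)$ norm to the $p$-th power is $\approx \delta N({\mathcal{E}}\cap J,\delta)$. For $|x|\lesssim \delta$ we get $M_{\mathcal{E}}f(x)\gtrsim1$. More usefully, for $|x|\approx\lambda$, a sphere of radius $t\in{\mathcal{E}}\cap J$ centered at $x$ lies in the annulus $t\pm\lambda$, so a fraction $\gtrsim \delta N({\mathcal{E}}\cap J,\delta)/\lambda$ of its surface meets $\operatorname{supp} f$. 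This gives $M_{\mathcal{E}}f\gtrsim \delta N/\lambda$ on a set of weighted measure $\approx\lambda^{d+\alpha}$. Comparing the two sides, optimizing over $\lambda$ and $J$, and rewriting the supremum over $\lambda$ through \eqref{eq:LAglob} produces the condition $\alpha\ge(d-1)(p-2)-(\nu^\sharp)^\dagger((d-1)(p-1))$. The generalized inverse appears exactly as the Legendre dual of the optimization in $\lambda=\delta^{\theta}$. I expect this exponent bookkeeping to need care (using intermediate $|x|$ ranges in $[\delta,\lambda]$, possibly a Knapp-type refinement), but it is routine once the example is right.

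\emph{Sufficiency.} By convexity it is enough to prove \eqref{eq:weighted} at points arbitrarily close to the upper line $\alpha=U(p)$, the lower curve $\alpha=L(p)$, and $p=\infty$. I would decompose $f=\sum_k f\mathbbm 1_{\{|y|\approx 2^k\}}$ and $x$ dyadically as $|x|\approx 2^j$. There are three regimes.
\begin{enumerate}[label=(\roman*)]
\item $|j-k|\le C$. The weight is essentially constant, so the unweighted $L^p$ bound for $p>p_\beta$ from \cite{SeegerWaingerWright1995}, rescaled, sums over $k$ for every $\alpha$.
\item $k\ll j$, where $f$ is concentrated near the origin. Here $A_tf(x)$ is nonzero only for $t\approx|x|$. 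Using $L^1\to L^\infty$-type bounds $|A_tf(x)|\lesssim t^{-(d-1)}\delta^{-?}$, and more precisely the $L^p\to L^q$ estimates for $M_{{\mathcal{E}}\cap J}$ with $|J|_\times=1$ in terms of $\beta$, the geometric sum in $j-k$ converges exactly when $\alpha<U(p)$.
\item $j\ll k$, where $x$ is near the origin and the sphere is almost centered. This is the regime I expect to be the main obstacle. We need $L^p$ bounds for the operator $f\mapsto\sup_{t\in {\mathcal{E}}\cap J}|A_tf|$ restricted to $|x|\le 2^{j-k}t$. Such an operator essentially averages $f$ over angular sectors of the annuli $||y|-t|\lesssim 2^{j}$, which reduces it to radial-type maximal functions over ${\mathcal{E}}$ at scale $\delta=2^{j-k}$ on intervals $J$ of all lengths in $[\delta,1]$.
\end{enumerate}
For regime (iii), I would first try the spherical maximal bounds for radial functions and their angular-localized versions from \cite{BeltranRoosSeeger-radial, BeltranRoosRutarSeeger}. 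These are governed by $\nu^\sharp$ and give a gain $\delta^{\gamma(p)}$ with $\gamma(p)$ positive precisely when $\alpha>L(p)$. Summing in $k-j$ then completes the proof, with the endpoint cases left open, as the theorem allows.
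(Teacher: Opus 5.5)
Your arguments for $p\ge p_\beta$ and $\alpha\le U(p)$ are fine and match the paper. The two steps where $\nu^\sharp$ actually enters, however, have genuine gaps.

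\textbf{Necessity of $\alpha\ge L(p)$.} Your test function, a union of $\delta$-thin annuli with radii in $\mathcal E\cap J$, is radial, and by your own numbers it yields only $\alpha\ge 1-d$. The ratio of $p$-th powers is $\gtrsim(\delta N/\lambda)^p\lambda^{d+\alpha}/(\delta N)=(\delta N)^{p-1}\lambda^{d+\alpha-p}$. Since $N(\mathcal E\cap J,\delta)\lesssim\lambda/\delta$, this is $\lesssim\lambda^{d-1+\alpha}$, which is bounded whenever $\alpha\ge 1-d$. The intermediate ranges $|x|\approx r\in[\delta,\lambda]$ merely replace $\lambda$ by $r$, so no bookkeeping in $\lambda=\delta^\theta$ will produce $(\nu^\sharp)^\dagger$. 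What is missing is the curvature factor $\delta^{(d-1)(p-1)}$, and it only comes from non-radial Knapp-type examples. The paper proves, for $E\subset[1,2]$ and $|I|=2^{-k}\ge 2^{-j}$,
\[\|M_E\|^p_{L^p(w_\alpha)\to L^p(w_\alpha)}\gtrsim 2^{-j(d-1)(p-1)}N(E\cap I,2^{-j})\,|I|^{(d-1)(2-p)+\alpha}.\]
For $k\le j/2$, take $f$ to be the indicator of a plate of size $2^{k-j}$ in the $x'$-directions and $2^{-j}$ in the $x_d$-direction, centred at $(0,a)$ with $a\in E\cap I$. Then $A_tf\gtrsim 2^{(k-j)(d-1)}$ on the $2^{-j}$-neighbourhoods of the caps $\{|x'|\le 2^{-k}\}$ of the spheres of radius $t\in\tau_j(I)$ centred at $(0,a)$. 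These neighbourhoods are pairwise disjoint and lie in $\{|x|\lesssim 2^{-k}\}$. For $j/2<k\le j$, take the indicator of the $2^{-j}$-neighbourhood of a $2^{k-j}$-cap of $\{|y|=a\}$, and test on the disjoint plates $\{|x'|\approx 2^{-k},\,|x_d-a+t|\le\varepsilon 2^{-j}\}$. Rescaling and taking suprema over $j$ and $I$ gives exactly $\nu^\sharp((d-1)(p-2)-\alpha)\le (d-1)(p-1)$.

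\textbf{Sufficiency in regime (iii).} Here the reduction to radial-type maximal operators is not valid, and it cannot give the right threshold. Take $d\ge 3$, radial $f(y)=f_0(|y|)$ supported in $\{\frac12\le|y|\le 4\}$, $t\in[1,2]$ and $|x|=r\le\frac14$. Then $|A_tf(x)|\lesssim r^{-1}\int_{t-r}^{t+r}|f_0|\lesssim r^{-1/p}\|f\|_p$, which gives boundedness near the origin for every $\alpha>1-d$. Yet the examples above, which live in exactly this configuration, show unboundedness for $\alpha<L(p)$, and in general $L(p)>1-d$. For $|x|\le\delta$ the sphere $x+tS^{d-1}$ is the tilted graph $|y|=t+\langle x,y/|y|\rangle+O(\delta^2)$, not an angular sector of an annulus. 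The claimed gain $\delta^{\gamma(p)}$ is asserted, not derived.

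Your scales are also misassigned. After rescaling to $t\approx1$, a point with $|x|\approx\delta$ only sees $f$ on $\{||y|-t|\lesssim\delta\}$. So the interval length is tied to $|x|$, and it is the covering scale that ranges below it; this is how the weight enters as $|I|^{\alpha+(d-1)(2-p)}$.

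The paper instead decomposes $\sigma=\sum_j\sigma*\phi_j$ with compactly supported $\phi_j$ at frequency $2^j$. For $|x|\approx 2^{-k}$ with $k\le j$, it localizes $f$ to annuli of width $2^{-k}$ matched with intervals of length $2^{-k}$. It then interpolates an $L^1$ bound with the $L^2$ bound $O(2^{-j(d-1)/2})$ coming from the decay of $\widehat\sigma$; the $L^1$ bound rests on the set $\{|x|\le 2^{-k}: ||x-y|-t|\lesssim 2^{-j}\}$ having measure $O(2^{-k(d-1)-j})$. This yields the factor $2^{-j(d-1)(1-\frac1p)}\sup_{|I|=2^{-k}}[N(E\cap I,2^{-j})2^{-k((d-1)(2-p)+\alpha)}]^{1/p}$, which is summable over $k\le j$ and $j$ under the hypothesis. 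The region $|x|\le 2^{-j}$ is handled by Stein--Weiss interpolation between $\alpha=1-d$ and $\alpha=0$.

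Your regime (ii), where you leave an exponent unspecified, is harmless. For $\alpha<0$ it follows from the unweighted bound, since the weight at $x$ is smaller than at $y$, and $\alpha\ge0$ is covered by Duoandikoetxea--Seijo.
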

Figure \ref{fig:WE} visualizes a typical set $\overline{{{\mathfrak{T}}}_{{\mathcal{E}}}}$, for a case with $\gamma=1$. To avoid a large picture we have scaled the vertical axis by the factor $(d-1)^{-1}$.
Note that $\overline{{{\mathfrak{T}}}_{{\mathcal{E}}}}$ is not necessarily a polygon, since $\nu^\sharp|_{[0,\infty)}$ can be any nonnegative increasing convex function such that $\nu^\sharp(\rho)=\rho$ for $\rho\ge 1$ (see \cite[Thm. 1.2 (ii)]{BeltranRoosRutarSeeger}).

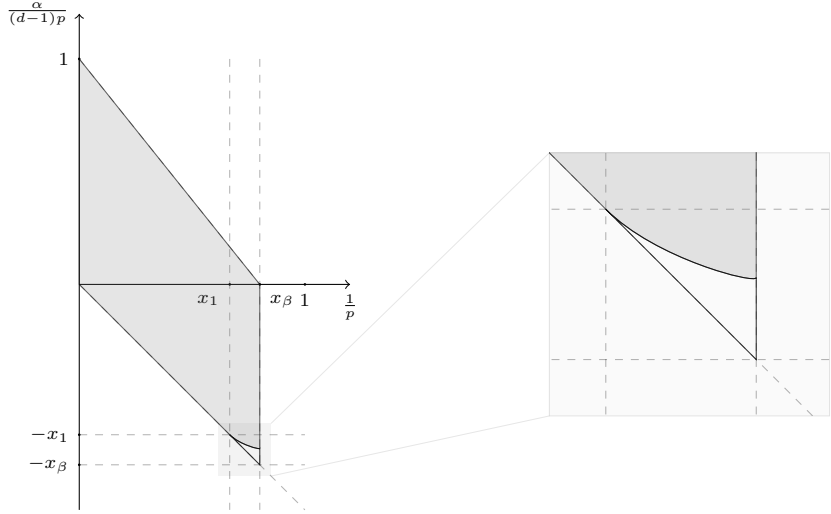
\begin{figure}[ht]
\begin{tikzpicture}
\def\d{3}
\def\b{.5}

\def\clipPad{.05}
\def\clipLx{ \Aonex - \clipPad }
\def\clipLy{ \Pfoury{\b} -\clipPad }
\def\clipHx{ \Pfourx{\b} + \clipPad }
\def\clipHy{ \Aoney + \clipPad }
\def\clipStyle{solid}
\def\clipOpacity{.1}

\begin{scope}[scale=3]
\definecoords

\coordinate (clipLL) at ({\clipLx}, {\clipLy});
\coordinate (clipHL) at ({\clipHx}, {\clipLy});
\coordinate (clipHH) at ({\clipHx}, {\clipHy});
\fill[opacity=.05] (clipLL) rectangle (clipHH);

\drawauxlines{P3}
\drawWbg
\drawA
\end{scope}

\begin{scope}[xshift=-3cm, yshift=11cm, scale=15]
\definecoords

\coordinate (clipLLw) at ({\clipLx}, {\clipLy});
\coordinate (clipLHw) at ({\clipLx}, {\clipHy});
\coordinate (clipHHw) at ({\clipHx+.015}, {\clipHy});
\draw[\clipStyle, opacity=\clipOpacity] (clipHL) -- (clipLLw);
\draw[\clipStyle, opacity=\clipOpacity] (clipHH) -- (clipLHw);
\draw[\clipStyle, opacity=\clipOpacity] (clipLLw) rectangle (clipHHw);
\fill[opacity=.02] (clipLLw) rectangle (clipHHw);
\clip (clipLLw) rectangle (clipHHw);

\drawauxlines{P3}
\drawWbg
\drawA

\end{scope}

\end{tikzpicture}

\caption{Typical shape of the type set ${{\mathfrak{T}}}_{{\mathcal{E}}}$ (shaded). The containing trapezoid is defined by the necessary conditions $p\ge p_\beta$ and \eqref{eqn:oldneccond}. Here
$x_1=\tfrac1{p_1}$, $x_\beta=\tfrac1{p_\beta}$.
}

\label{fig:WE}

\end{figure}

Note that if $\rho_*=\sup\{\rho: \nu^\sharp(\rho)=\beta\}$, then $0\le \rho_*\le \beta$, and by convexity of $\nu^\sharp$ it is continuous and strictly increasing for $\rho\ge \rho_*$. Thus $(\nu^\sharp)^\dagger$ is equal to the inverse of $\nu^\sharp|_{[\rho_*,\infty)}$.
Note also that $\nu^\sharp(\rho)=\rho$ for $\rho\ge \gamma$, where $\gamma\ge \beta$ is the quasi-Assouad dimension of ${{\mathcal{E}}}$ (see \cite{LuXi2016,FraserHareHareTroscheitYu,FraserBook}).
Finally, note $L(p_\beta)= 1-d+\beta-\rho_*$ and
$L(p)=1-d$ for $p\ge p_\gamma$.

We may also state Theorem \ref{thm:main} with a more concise, albeit implicit condition.
Namely, if we set
\begin{equation}\label{eq:Thetadef}\Theta(p,\alpha)= \max\big \{\alpha+\beta, \,\,\nu^\sharp\big((d-1)(p-2)-\alpha\big)\big\},
\end{equation}
then
\begin{equation}\label{eq:typesetTheta} \overline{{{\mathfrak{T}}}_{{\mathcal{E}}}} = \big\{ (\tfrac 1p,\tfrac\alpha p)\in [0,1]\times \mathbb{R} : p\ge 1+ (d-1)^{-1}\Theta(p,\alpha)\big\}.
\end{equation}
See \S \ref{sec:Thetaequivalence} for details on this equivalence.
Our proof of Theorem \ref{thm:main} will rely on the implicit formulation.

We briefly discuss Theorem \ref{thm:main} in the context of the previous literature.
In the unweighted case $\alpha=0$ we recover the result from \cite{SeegerWaingerWright1995}. Examples from the unweighted theory show that the condition
$p\ge p_\beta$ is necessary. Since $\nu^\sharp (\rho)\ge \max(\rho, \beta)$ for all $\rho\in {{\mathbb{R}}}$ (see \cite[Lemma 2.1]{BeltranRoosSeeger-radial}), we get the necessary condition \begin{equation}\label{eqn:oldneccond}
-(d-1)\le \alpha\le (d-1)(p-1)-\beta,
\end{equation}
which was also known previously.

The case of the full maximal operator with ${{\mathcal{E}}}=(0,\infty)$ was almost fully resolved by Duoandikoetxea and Vega \cite{DuoandikoetxeaVega}, where it was shown that boundedness holds for the full maximal operator with ${{\mathcal{E}}}=(0,\infty)$ when $1-d<\alpha<(d-1)p-d$ and $p>1+\frac{1}{d-1}$. This matches our condition since in this case $\nu^\sharp(\rho)=\max(1,\rho)$.
See also the recent endpoint results for $\alpha=1-d$ by Juyoung Lee \cite{LeeJ-weighted}, for $d=2$, $p>2$, and $d\ge 3$, $p\ge 2$.
Sharp results for the full spherical maximal operator acting on radial functions can be found in \cite{DuoandikoetxeaMoyuaOruetxebarria}, see also \cite{NowakRoncalSzarek}.
In the lacunary case ${{\mathcal{E}}}=\{2^k:k\in {{\mathbb{Z}}}\}$ it was proved in \cite{DuoandikoetxeaVega} that $M_{{{\mathcal{E}}}}$ is bounded on $L^p(w_\alpha)$ if and only if $1-d\le \alpha<(d-1)(p-1)$, $p>1$.

For more general sets ${{\mathcal{E}}}\subset (0,\infty)$ the maximal operators $M_{{\mathcal{E}}}$ were investigated by Duoandikoetxea and Seijo \cite{DuoandikoetxeaSeijo}.
They showed that $M_{{\mathcal{E}}}$ fails to be $L^p(w_\alpha)$-bounded if $\alpha<1-d$ or if $\alpha> (d-1) (p-1)-\beta$, and proved a positive result if $1-d<\alpha< (d-1) (p-1)-\beta$, and $p>p_1=1+\frac{1}{d-1}$. Theorem \ref{thm:main} shows that the latter $p$-range can be improved to $p>p_{\gamma}=1+\frac{\gamma}{d-1}$.

Less was known in the remaining cases
$p_\beta \le p\le p_\gamma$. The zoomed-in region in Figure \ref{fig:WE} shows a typical behavior in this range.
For a class of ``regular sets'' ${{\mathcal{E}}}$ an essentially sharp result for this range was obtained in \cite{DuoandikoetxeaSeijo}. In our terminology, these are precisely the sets where $\beta$ is equal to the Assouad dimension
and in this case $\nu^\sharp(\rho)=\max(\rho,\beta)$; specific examples include self-similar Cantor sets.
Duoandikoetxea and Seijo \cite{DuoandikoetxeaSeijo} also showed that there are sets $E_1, E_2$ with $\beta_{E_1}=\beta_{E_2}$ but with a different $\alpha$-range for $L^p(w_\alpha)$ boundedness for certain values of $p$. In particular, they investigated the model examples ${{\mathcal{E}}}=\{1+n^{-a}\,:\, n\in \mathbb{N}\}$, proved an essentially sharp result for $a=1$, but obtained only partial results for $a\neq 1$. These sets are all special cases
of \emph{Assouad regular sets}; for the definition we refer to \S\ref{sec:examples}. There we further explore our boundedness condition for Assouad regular sets, and then also for finite unions of those. However,
Theorem \ref{thm:main} goes far beyond these classes and settles the problem on $L^p(w_\alpha)$-boundedness, up to endpoints, for {\it all} dilations sets.

A key point in the proof is the discussion of two families of examples, one of them having appeared previously in papers on $L^p$-improving bounds for spherical maximal operators in \cite{AHRS, RoosSeeger}. These yield the required lower bounds on the maximal function proved in \S \ref{sec:lowerbounds}. The upper bounds for the proof of Theorem \ref{thm:main} are given in \S\ref{sec:upperbounds}.

\subsection*{Problems for general weights} One may raise the question of weighted $L^p$ inequalities with {\it general} weights; we thank the anonymous referee for the suggestion to discuss it here. A characterization of weights $w$ and exponents $p$ for which $M_{{\mathcal{E}}}$ is bounded on $L^p(w)$ is currently wide open, for all choices of ${{\mathcal{E}}}$. Weighted norm inequalities for some classes of general weights satisfying Muckenhoupt and reverse H\"older inequalities can be established via general implications of the theory of sparse domination, see \cite{bernicot-frey-petermichl}. We note that while sparse bounds are also interesting in their own right they do not currently recover the sharp $L^p(w_\alpha)$ boundedness ranges for power weights.
The sparse bounds for $M_{{\mathcal{E}}}$ are in turn closely connected to unweighted $L^p$-improving bounds for local versions of the maximal operator, see \cite{laceyJdA19}, \cite{BeltranRoosSeeger-sparse}, and \cite{AHRS, RoosSeeger}.
A characterization of the $L^p$ improving bounds for the local operators $M_E$, for $E\subset[1,2]$ is essentially known only for finite unions of Assouad-regular sets, and the $L^p$ improving problem for general $E\subset [1,2]$ is conjectured to be closely related to the Legendre--Assouad function $\nu_E^\sharp$ discussed above. For a more precise and detailed account of these connections we refer the reader to the survey paper \cite{RoosSeeger-problems}.

\subsection*{Notation} We write, for nonnegative quantities $u,v$, $u \lesssim v$ or $u\lesssim_L v$
to indicate $u \leq C v$ for some constant $C$ which may depend on some list $L$. We write
$u \approx v$ to indicate that both $u \lesssim v$ and $v \lesssim u $ hold.

\section{Preliminaries}

\subsection{Type set convexity}\label{sec:typesetconvex}
We show that the type set ${{\mathfrak{T}}}_{{\mathcal{E}}}$ as defined in \eqref{eq:fW} is convex.
Consider the operator
$\widetilde M_{u,{{\mathcal{E}}}}$ defined by \[\widetilde M_{u,{{\mathcal{E}}}} f(x)= w_u(x) M_{{\mathcal{E}}}[ fw_{-u}](x)\]
and note that the $L^p(w_\alpha)$ boundedness of $M_{{\mathcal{E}}}$ is equivalent to the boundedness of $\widetilde M_{u,{{\mathcal{E}}}}$ on unweighted $L^p$ if $u=\alpha/p$. Hence the convexity of ${{\mathfrak{T}}}_{{\mathcal{E}}}$ can be seen by applying Stein's analytic interpolation theorem \cite{stein-weiss}
to the family of linear operators
$T_{u, {{\mathcal{E}}}} f(x)= |x|^{u} A_{t(x) } [ f|\cdot|^{-u}] (x)$
where $u= (1-z)u_0+zu_1$ with $(p_0^{-1}, u_0)\in {{\mathfrak{T}}}_{{\mathcal{E}}}$, $(p_1^{-1}, u_1)\in {{\mathfrak{T}}}_{{\mathcal{E}}}$, $0\le {\operatorname{Re\,}}(z)\le 1$, and $t(x)$ is a measurable function of radii with $t(x)\in{{\mathcal{E}}}$. Alternatively (and equivalently) one can also argue more directly
by applying the Stein--Weiss theorem for interpolation with changes of measures \cite{SteinWeiss-weights} to the operator $A_{t(\cdot)}$.

\subsection{An equivalent condition}\label{sec:Thetaequivalence}
We will prove the equivalence of the two type set descriptions \eqref{eq:typesetexplicit} and \eqref{eq:typesetTheta}. The condition $p\ge 1+ (d-1)^{-1}\Theta(p,\alpha)$
in \eqref{eq:typesetTheta} can be written as
\begin{equation}
\label{Thmod} \max\big\{ \alpha+\beta,\, \nu^\sharp\big( (d-1)(p-2)-\alpha\big) \big\} \le (d-1)(p-1) .
\end{equation}
Thus we need to check that \eqref{Thmod}
is equivalent with the conditions $p\ge p_\beta=1+\frac{\beta}{d-1}$, $L(p)\le \alpha\le U(p)$.
The condition $\alpha\le U(p)$ coincides with the condition
$\alpha+\beta \le (d-1)(p-1)$.
Thus it only remains to show that
\begin{equation} \label{eq:equivlower}
\nu^\sharp\big((d-1)(p-2)-\alpha\big) \le (d-1)(p-1) \iff p\ge p_\beta,\,\, \alpha\ge L(p) .
\end{equation}
Recall that by definition of $(\nu^\sharp)^\dagger(s)=\sup\{\rho\,:\,\nu^\sharp(\rho)\le s\}\ge 0$ for $s\in [\beta, \infty)$ and since $\nu^\sharp$ is continuous and increasing, we have for all $s\ge \beta$ and $\rho\in\mathbb{R}$:
\[ \nu^\sharp(\rho)\le s\quad \Longleftrightarrow\quad \rho\le (\nu^\sharp)^\dagger(s). \]
Setting $\rho=(d-1)(p-2)-\alpha$ and $s=(d-1)(p-1)$ and noting that $\nu^\sharp(\rho)\le s$ implies $\beta\le s$ (i.e. $p\ge p_\beta$), we obtain \eqref{eq:equivlower}.

\subsection{Scaling and a global to local reduction} \label{sec:localglobal}
It will be useful to observe that
the maximal operator
enjoys dilation invariance. Precisely, for every $\lambda>0$ and every ${{\mathcal{E}}}\subset (0,\infty)$ we have
\begin{equation}\label{eq:scaling} \|M_{\lambda {{\mathcal{E}}}}\|_{L^p(w_\alpha)\to L^p(w_\alpha) } = \|M_{{{\mathcal{E}}}}\|_{L^p(w_\alpha)\to L^p(w_\alpha)}.
\end{equation}
Moreover, Duoandikoetxea and Seijo \cite[Lemma 4]{DuoandikoetxeaSeijo} observed that for $\alpha<0$ the global bounds for $M_{{\mathcal{E}}}$ follow from the uniform bounds for the spherical maximal operators associated to the sets ${{\mathcal{E}}}_R = R^{-1}({{\mathcal{E}}} \cap[R,2R])\subset [1,2]$, provided we assume $L^p$-boundedness for the unweighted operator.
\begin{lemma}[\cite{DuoandikoetxeaSeijo}] \label{lem:global}
Let $p\in [1,\infty]$.
Then for $\alpha< 0$ there exists a constant $C_{\alpha,p} <\infty $ such that
\[ \|M_{{{\mathcal{E}}}}\|_{L^p(w_\alpha)\to L^p(w_\alpha) } \le C_{\alpha,p} \big
(\|M_{{\mathcal{E}}}\|_{L^p(\mathbb{R}^d)\to L^p(\mathbb{R}^d)}+ \sup_{R>0} \|M_{{{\mathcal{E}}}_R}\|_{L^p(w_\alpha)\to L^p(w_\alpha) }\big).
\]
\end{lemma}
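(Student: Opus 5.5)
We split $\mathbb{R}^d$ into dyadic shells and separate the contributions of radii that are small, comparable, or large relative to $|x|$. Fix $f\ge0$ and write $M_{{\mathcal{E}}} f(x)\le \sup_k \sup_{t\in{{\mathcal{E}}}\cap[2^k,2^{k+1})}A_tf(x)$. For $x$ with $|x|\approx 2^j$ and $t\approx 2^k$, the sphere $x+tS^{d-1}$ lies in $\{|y|\le C2^{j}\}$ when $k\le j-2$, in $\{|y|\approx 2^k\}$ when $k\ge j+2$, and in $\{|y|\lesssim 2^j\}$ when $|k-j|\le 1$. We therefore bound $M_{{\mathcal{E}}} f(x)\le I(x)+II(x)+III(x)$, corresponding to $k\le j-2$, $|k-j|\le1$ and $k\ge j+2$ respectively.

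For $I$, the radii $t\in{{\mathcal{E}}}$ with $t\le |x|/4$ see only $y$ with $|y|\approx |x|$, so $w_\alpha(y)\approx w_\alpha(x)$ on the support of the relevant averages. Decomposing $f=\sum_j f\mathbbm 1_{\{|y|\approx 2^j\}}$ with slightly enlarged shells gives
\[\int_{|x|\approx 2^j}|I|^p w_\alpha\lesssim 2^{j\alpha}\|M_{{\mathcal{E}}}\|_{L^p\to L^p}^p\int_{|y|\approx 2^j}|f|^p.\]
Summing over $j$ (the shells have bounded overlap) bounds $I$ by the unweighted norm.

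For $II$, scaling \eqref{eq:scaling} reduces to $j=0$ and $2^k\in\{1/2,1,2\}$. The term is then controlled by $\|M_{{{\mathcal{E}}}_R}\|_{L^p(w_\alpha)}$ for $R=2^{k}$, with at most three values of $R$ per $j$. This is where the supremum over $R$ enters, and no summation issue arises because only boundedly many $k$ occur for each $j$.

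For $III$, the main point is summability. When $t\ge 4|x|$, $A_tf(x)\lesssim 2^{-k(d-1)}\int_{|y|\approx 2^k}|f|\,d\sigma$-type averages over shells, and a crude bound is $A_tf(x)\lesssim \sup_{t}$ of averages of $f$ over the annulus $\{|y|\approx t\}$ along a sphere. It is cleaner to bound $A_tf(x)$ by $A_t$ applied to $f\mathbbm 1_{\{|y|\approx2^k\}}$, and then use that $x\mapsto A_tg(x)$ varies little for $|x|\le t/4$; more precisely, $\sup_{|x|\le 2^{k-2}}$ of it is dominated by a suitable average over $|x'|\le 2^{k-2}$ of the maximal function $M_{{{\mathcal{E}}}}g(x')$. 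Since this is not literally true for singular spheres, I would instead use Hölder directly:
\[\int_{|x|\approx 2^j}|III|^p w_\alpha\le \sum_{k\ge j+2}\int_{|x|\approx2^j}\sup_{t\approx2^k}|A_t f_k|^p\,2^{j\alpha}\,dx\]
with $f_k=f\mathbbm 1_{\{|y|\approx 2^k\}}$. By translation, the inner integral is at most $\int_{|x|\le 2^{k}}|M_{{\mathcal{E}}} f_k|^p$ restricted to the smaller ball. This naive step loses, and it is the main obstacle: one needs a gain $2^{(j-k)d}$ from the smallness of the $x$-region. I would obtain it from the fact that for $|x|\le2^{k-2}$ the function $x\mapsto A_tf_k(x)$ satisfies a local $L^p$ average bound: averaging the estimate over translates $x+z$, $|z|\le 2^{k-2}$, gives $\int_{|x|\approx2^j}\sup_t|A_tf_k|^p\lesssim 2^{(j-k)d}\|M_{{\mathcal{E}}}\|^p\|f_k\|_p^p$, using that each small ball is one of about $2^{(k-j)d}$ translated copies covering the big ball with comparable maximal-function mass. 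I am unsure this pigeonholing can be made rigorous; otherwise I would fall back on the $L^\infty$ bound $A_tf_k\lesssim$ average over spheres of $|f_k|$ near $|y|\approx2^k$, combined with Minkowski's inequality in $t$. Granting the gain, the $j$-shell contributes $\sum_{k\ge j+2}2^{j\alpha}2^{(j-k)d}\|f_k\|_p^p\approx\sum_k 2^{(j-k)(d+\alpha)}\|f_k\|^p_{L^p(w_\alpha)}$. Summing over $j<k$ converges provided $\alpha>-d$, which covers $\alpha<0$ (the case relevant here is $\alpha\ge1-d$, and for $\alpha\le -d$ the finiteness claim is trivial or handled separately). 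Combining $I$, $II$ and $III$ gives the lemma.
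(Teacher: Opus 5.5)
\textbf{There is a genuine gap.} Your region $I$ is handled correctly. The argument fails in $III$, and $II$ contains a divergent sum you did not notice. In effect you have swapped the roles of the two hypotheses.

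\textbf{Region III.} The claimed gain $\int_{|x|\approx 2^j}\sup_t|A_tf_k|^p\lesssim 2^{(j-k)d}\|M_{\mathcal E}\|^p_{L^p\to L^p}\|f_k\|_p^p$ is false. Take $\mathcal E=\{1\}$, $k=0$, $f_0=\mathbbm 1_{\{||y|-1|\le\delta\}}$ and $2^j=\delta/2$. For $|x|\le\delta$ the whole sphere $x+S^{d-1}$ lies in the support of $f_0$, so $A_1f_0=1$ there. Hence the left side is $\approx\delta^d$, while your right side is $\approx\delta^{d+1}$.

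This is exactly the example behind the necessary condition $\alpha\ge 1-d$, so a bound for $III$ by the unweighted norm that sums for all $\alpha>-d$ cannot hold. More generally, the lower-bound examples of \S\ref{sec:lowerbounds} all live in region $III$: $|x|\lesssim 2^{-k}\ll t\approx 1$, with $f$ supported in $\{1\le|y|\le 2\}$. So no argument using only $\|M_{\mathcal E}\|_{L^p\to L^p}$ can control $III$.

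This is where $\sup_R\|M_{\mathcal E_R}\|_{L^p(w_\alpha)\to L^p(w_\alpha)}$ must be used, and no decay in $k-j$ is needed. For $|x|<t/2$ and $t\in[2^k,2^{k+1})$, the sphere $x+tS^{d-1}$ lies in the shell $\{2^{k-1}\le|y|\le 2^{k+2}\}$. With $f_k=f\mathbbm 1_{\{2^{k-1}\le|y|\le 2^{k+2}\}}$ you therefore get
\[|III(x)|^p\le\sum_k|M_{\mathcal E\cap[2^k,2^{k+1}]}f_k(x)|^p .\]
Now integrate against $w_\alpha$. Use \eqref{eq:scaling} to identify the norm of $M_{\mathcal E\cap[2^k,2^{k+1}]}$ with that of $M_{\mathcal E_{2^k}}$, and use the bounded overlap of the shells. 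This gives $\int|III|^pw_\alpha\lesssim\sup_R\|M_{\mathcal E_R}\|^p_{L^p(w_\alpha)\to L^p(w_\alpha)}\|f\|^p_{L^p(w_\alpha)}$.

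\textbf{Region II.} Here the local weighted bound is the wrong tool. For $|x|\approx 2^j$ and $t\approx 2^j$ the relevant input is $f\mathbbm 1_{\{|y|\lesssim 2^j\}}$, which is supported in a ball, not a shell. Your bound therefore sums to $\sum_j\|f\mathbbm 1_{\{|y|\lesssim 2^j\}}\|^p_{L^p(w_\alpha)}$, which diverges as soon as $f\neq0$. The summation problem is in $j$, not in $k$.

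Use instead the unweighted bound together with $\alpha<0$:
\[\int_{|x|\approx 2^j}|II|^p\,w_\alpha\,dx\lesssim\|M_{\mathcal E}\|^p_{L^p\to L^p}\,2^{j\alpha}\int_{|y|\lesssim 2^j}|f(y)|^p\,dy .\]
Then sum in $j$, using $\sum_{j:\,2^j\gtrsim|y|}2^{j\alpha}\lesssim|y|^\alpha$, which holds precisely because $\alpha<0$.

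This is where the hypothesis $\alpha<0$ enters. Your proposal never uses it in an essential way, which is a sign that the bookkeeping was misallocated. The same division of labor appears in the proof of Proposition \ref{prop:local}. There $f_{\mathrm{high}}$ and $f_{\mathrm{low}}$ are treated with the unweighted bound (the latter using $\alpha\le 0$). The weight plays an essential role only for $M_Ef_0$ near the origin.
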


\section{Examples of type sets}\label{sec:examples}
In this section we provide further illustrations of special cases of Theorem \ref{thm:main}.

\subsection{Assouad regular sets}
We examine the boundedness condition in Theorem \ref{thm:main} for some additional examples of sets ${{\mathcal{E}}}$.
The Assouad spectrum of ${{\mathcal{E}}}\subset (0,\infty)$ (taken with respect to the appropriate metric \eqref{eqn:metric}) will be denoted by $\dim_{A,\theta}\,{{\mathcal{E}}}$. Note that it equals the Assouad spectrum taken with respect to the Euclidean metric whenever ${{\mathcal{E}}}$ is compactly supported in $(0,\infty)$.
We say that a set ${{\mathcal{E}}}$ is {\it $(\beta,\gamma)$-Assouad regular} (also just {\it(quasi)-Assouad regular}) if
\[\dim_{\mathrm{A},\theta} {{\mathcal{E}}}=
\min(\tfrac{\beta}{1-\theta},\gamma), \]
Here $\gamma$ denotes the quasi-Assouad dimension.
In this case the Legendre--Assouad function is given by
\[\nu_{{\mathcal{E}}}^\sharp(\rho)= \begin{cases}
(1-\tfrac \beta\gamma)\rho+\beta &\text{ if $\rho\le \gamma $, }
\\
\rho &\text{ if } \rho>\gamma.
\end{cases}
\]

This class
of sets played a special role in several previous papers, see e.g. \cite{AHRS,RoosSeeger,BeltranRoosSeeger-radial,BeltranRoosRutarSeeger,Rutar24}.
If ${{\mathcal{E}}}$ is $(\beta,\gamma)$-Assouad regular, then the boundedness condition of Theorem \ref{thm:main} can be summarized as follows:
\\

(i) If $p>p_\gamma=1+\frac{\gamma}{d-1}$, then $M_{{\mathcal{E}}}$ is bounded on $L^p(w_\alpha)$ if
\[ 1-d<\alpha<(d-1)(p-1)-\beta. \]
This part holds for all sets ${{\mathcal{E}}}$ with quasi-Assouad dimension $\gamma$ and also fully describes up to endpoints the boundedness in the case of the ``regular sets'' from \cite{DuoandikoetxeaSeijo} where $\beta=\gamma$.\\

(ii) If $p_\beta<p\le p_\gamma$, then $M_{{\mathcal{E}}}$ is bounded on $L^p(w_\alpha)$ if
\[1-d+ {\beta}\,\tfrac{\gamma-(d-1)(p-1)}{{\gamma}-\beta} <\alpha< (d-1)(p-1)-\beta.\]

These conditions are sharp up to endpoints.
The type set ${{\mathfrak{T}}}_{{\mathcal{E}}}$ in this case takes the shape of a polygon. If $\beta<\gamma$, then the lower boundary in the zoomed-in portion of Figure \ref{fig:WE} consists of a piecewise linear function with two pieces (the `kink' occurs at $1/p=1/p_\gamma$ and corresponds to the transition from case (i) to case (ii)).

\begin{remark}
The sequence sets $E_a=\{1+n^{-a}\,:\,n\in\mathbb{N}\}\subset [1,2]$ for $a>0$ are examples of Assouad regular sets with $\beta=\frac1{1+a}$ and $\gamma=1$, and so are the corresponding `global' sets ${{\mathcal{E}}}_a=\cup_{k\in\mathbb{Z}} 2^k E_a\subset (0,\infty)$.
The middle-third Cantor set in $[1,2]$ is an example of an Assouad regular set with $\beta=\gamma=\log_32$ (and in this case our result is already covered by \cite{DuoandikoetxeaSeijo}).
\end{remark}

\subsection{Finite unions}
The boundedness condition becomes more complicated if we consider finite unions of Assouad-regular sets.
Let ${{\mathcal{E}}}=\cup_{j=1}^N {{\mathcal{E}}}_j$ where ${{\mathcal{E}}}_j$ is a $(\beta_j,\gamma_j)$-Assouad regular set.
Then $\beta=\beta_{{{\mathcal{E}}}}$ equals $\max_{j=1,\dots,N} \beta_j$ and $M_{{\mathcal{E}}}$ is bounded on $L^p(w_\alpha)$ if
$ p>p_\beta $ and $L(p)<\alpha<(d-1)(p-1)-\beta,$ where
\begin{align*}
L(p)&= 1-d+ \max\Big\{0, \max_{\substack{j=1,\dots, N\\ {\gamma}_j > \beta_j} }
{\beta_j}\,\tfrac{\gamma_j-(d-1)(p-1)}{{\gamma}_j-\beta_j} \Big\}
\end{align*}
and this condition is sharp up to endpoints.
Since
\[\max_{j=1,\dots,N} \|M_{{{\mathcal{E}}}_j}\|_{p\to p} \le \|M_{{{\mathcal{E}}}}\|_{p\to p}\le \sum_{j=1}^N \|M_{{{\mathcal{E}}}_j}\|_{p\to p},\]
this is an immediate consequence of the Assouad regular case.
The type set ${{\mathfrak{T}}}_{{\mathcal{E}}}$ in this case again takes the form of a polygon with an increased number of edges on the lower boundary in the zoomed-in region of Figure \ref{fig:WE}.

\section{Lower bounds}\label{sec:lowerbounds}
In this section we prove the inclusion `$\subset$' in Theorem \ref{thm:main}.
The necessary condition $\alpha \le (d-1)(p-1)-\beta$ can be found in the paper by Duoandikoetxea and Seijo \cite{DuoandikoetxeaSeijo} and is obtained by testing $M_{{\mathcal{E}}}$ on $\chi_\delta$, the characteristic function of the ball of radius $\delta$, centered at the origin.

It remains to prove the necessary condition $\alpha\ge L(p)$ in Theorem \ref{thm:main} which by \eqref{eq:equivlower}
is equivalent to
\begin{equation}\label{eqn:maincondition}
\nu^\sharp\big((d-1)(p-2)-\alpha\big) \le (d-1)(p-1)
\end{equation}
(see \eqref{eq:typesetTheta}).
To this end it will suffice to prove that
\begin{equation}\label{eqn:penultlowerbd}
\|M_{E}\|_{L^p(w_\alpha)\to L^p(w_\alpha)}^p
\gtrsim 2^{-j(d-1)(p-1)} N(E\cap I, 2^{-j}) |I|^{(d-1)(2-p)+\alpha}
\end{equation}
for all $E\subset [1,2]$ and all $j\ge 1$ and all intervals $I\subset [1,2]$ of length $2^{-j}\le |I|\le 1$.
Indeed, using \eqref{eq:scaling} that would imply that for every ${{\mathcal{E}}}\subset (0,\infty)$ and every $R>0$ and $j\ge 1$, $2^{-j}\le |I|\le 1$,
\begin{align*}
\|M_{{{\mathcal{E}}}}\|_{L^p(w_\alpha)\to L^p(w_\alpha)}^p &\ge \|M_{{{\mathcal{E}}}_R}\|_{L^p(w_\alpha)\to L^p(w_\alpha)}^p \\&\ge
2^{-j(d-1)(p-1)} N({{\mathcal{E}}}_R\cap I, 2^{-j}) |I|^{(d-1)(2-p)+\alpha},
\end{align*}
where ${{\mathcal{E}}}_R=R^{-1}({{\mathcal{E}}}\cap [R,2R])$. By dilation invariance of the metric \eqref{eqn:metric}, $N({{\mathcal{E}}}_R\cap I, 2^{-j})=N({{\mathcal{E}}}\cap J,2^{-j})$ for $J=RI\cap [R,2R]$
and $|J|_\times = |I|_\times\approx |I|$. Taking a supremum over $j, I$ and $R>0$ and comparing to the definition \eqref{eq:LAglob} therefore gives \eqref{eqn:maincondition}.

It remains to show \eqref{eqn:penultlowerbd}.
Notice that for all $f$ supported in
\[{{\mathfrak{A}}} =\{y:1\le |y|\le 2\}\]
we have $\|f\|_{L^p(w_\alpha)}\approx \|f\|_p$. Observe that \eqref{eqn:penultlowerbd} follows from the following:
\begin{lemma}\label{lem:lowerboundgoal}
Let $E\subset [1,2]$ and $0< k\le j$, and $I\subset [1,2]$ an interval of length $2^{-k}$. Then
\begin{multline}\label{eq:lowerbdgoal}
\sup_{\substack{\|f\|_p
\le 1\\ {{\text{\rm supp }}}(f)\subset {{\mathfrak{A}}}}} 2^{k\frac \alpha p} \Big( \int_{|x|\le 2^{-k} } |M_{E} f(x)|^p dx\Big)^{\frac 1p}
\\
{\gtrsim} 2^{-j(d-1)(1-\frac 1p)} N(E\cap I, 2^{-j})^{\frac 1p} 2^{-k(\frac \alpha p+(d-1)(\frac 2p-1))} .
\end{multline}
\end{lemma}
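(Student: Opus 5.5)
The plan is to test a single normalized characteristic function, adapted to $I$ and to the scale $2^{-j}$, and to show that $M_E f$ is large on $\approx N(E\cap I,2^{-j})$ pairwise disjoint subsets of the ball $\{|x|\le 2^{-k}\}$. First note that the factor $2^{k\alpha/p}$ on the left cancels against $2^{-k\alpha/p}$ on the right. So \eqref{eq:lowerbdgoal} amounts to the unweighted estimate
\[
\int_{|x|\le 2^{-k}}|M_Ef|^p\,dx\gtrsim N\,2^{-j(d-1)(p-1)}\,2^{k(d-1)(p-2)}\,\|f\|_p^p
\]
for a suitable $f$ supported in ${{\mathfrak{A}}}$, where $N=N(E\cap I,2^{-j})$.

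\emph{Construction.} Write $I=[t_0,t_0+2^{-k}]$ and set $\theta=2^{k-j}\le 1$. Let $f$ be the characteristic function of
\[
\{y:\ \big||y|-t_0\big|\le 2^{-j},\ \angle(y,e_1)\le c\,\theta\},
\]
which lies in ${{\mathfrak{A}}}$ and satisfies $\|f\|_p^p\approx 2^{-j}\theta^{d-1}$. Choose $t_1,\dots,t_N\in E\cap I$ that are $2^{-j}$-separated; this is possible up to a constant factor in $N$. Set $s_i=t_i-t_0\in[0,2^{-k}]$ and
\[
X_i=\{x=(x_1,x'):\ |x_1+s_i|\le c2^{-j},\ |x'|\le c2^{-k}\}.
\]
The slabs $X_i$ are contained in $\{|x|\le 2^{-k}\}$ (after a harmless adjustment of constants) and are essentially disjoint, because the $s_i$ are $2^{-j}$-separated.

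\emph{Key estimate.} The main step is to show $A_{t_i}f(x)\gtrsim \theta^{d-1}$ for $x\in X_i$. Write $y=x+t_i\omega$. For the model center $x=-s_ie_1$ one computes $|y|^2=t_i^2+s_i^2-2t_is_i\omega_1$, hence $|y|-t_0\approx s_i(1-\omega_1)\approx s_i\varphi^2$, where $\varphi=\angle(\omega,e_1)$. Because $s_i\le 2^{-k}$, this is $\lesssim 2^{-k}\theta^2=2^{k-2j}\le 2^{-j}$ on the cap $\varphi\le c\theta$. Thus the internally almost tangent sphere of radius $t_i$ stays in the $2^{-j}$-neighborhood of $t_0S^{d-1}$ along the entire $\theta$-cap, and the cap has $\sigma$-measure $\approx\theta^{d-1}$.

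Next consider perturbations. Moving $x_1$ by $O(2^{-j})$ changes $|y|$ by $O(2^{-j})$. Moving $x'$ by $\eta$ changes $|y|$ by $O(\eta\varphi+\eta^2)$, which is $\lesssim 2^{-j}$ for $\eta\lesssim 2^{-j}/\theta=2^{-k}$ (and $\eta^2\le 2^{-2k}\le 2^{-j}$ is then also fine). Since $t_i\in E$, we get $M_Ef\ge A_{t_i}f\gtrsim\theta^{d-1}$ on $X_i$.

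\emph{Conclusion.} Summing over the disjoint $X_i$, each of measure $\approx 2^{-j}2^{-k(d-1)}$, gives
\[
\int_{|x|\le2^{-k}}|M_Ef|^p\gtrsim N\,\theta^{p(d-1)}\,2^{-j}2^{-k(d-1)}.
\]
Dividing by $\|f\|_p^p\approx 2^{-j}\theta^{d-1}$ leaves $N\theta^{(p-1)(d-1)}2^{-k(d-1)}$. Substituting $\theta=2^{k-j}$, this equals $N\,2^{-j(d-1)(p-1)}2^{k(d-1)(p-2)}$, which is exactly the required bound.

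The main obstacle is the geometric containment in the key estimate: choosing the angular width $\theta=2^{k-j}$, rather than the naive $2^{-k}$, is what makes the exponents match. One must also verify carefully that the second-order terms, namely the curvature mismatch $s_i\varphi^2$, the tilt $\eta\varphi$, and $\eta^2$, all stay $\lesssim 2^{-j}$ uniformly over $x\in X_i$ and $s_i\in[0,2^{-k}]$. This uses only $k\le j$ and elementary expansions of $|x+t\omega|^2$, but the constants $c$ must be chosen consistently.
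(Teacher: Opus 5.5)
\textbf{There is a genuine gap: your argument does not cover $0<k<j/2$.} For $j/2\le k\le j$ your argument is correct. There it is essentially the paper's second example: a $2^{-j}$-neighbourhood of a $2^{k-j}$-cap of a sphere about the origin, tested on flat slabs at height $\approx t_0-t_i$. But the lemma is stated, and needed, for all $0<k\le j$. With $\delta=2^{-j}$, the range $k<j/2$ corresponds to intervals with $\delta^{1/2}<|J|_\times<1$ in the definition of $\nu^\sharp$, and there your key estimate fails for two independent reasons.

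First, your step $\eta^2\le 2^{-2k}\le 2^{-j}$ is exactly the condition $k\ge j/2$. From
\[
|x+t_i\omega|^2=(x_1+t_i)^2+|x'|^2+2t_ix_1(\omega_1-1)+2t_i\langle x',\omega'\rangle
\]
you see that for $x\in X_i$ with $|x'|\approx 2^{-k}$, the whole image of the cap lies at distance $\approx 2^{-2k}$ from $t_0S^{d-1}$. That is far outside the $2^{-j}$-shell once $j-2k$ is large.

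Second, you never checked the angular constraint $\angle(y,e_1)\le c\theta$ built into $f$. We have $|y'|\ge|x'|-t_i|\omega'|$, and $|x'|\approx 2^{-k}\gg 2^{k-j}=\theta$ when $k<j/2$. So $y\notin\mathrm{supp}\,f$ even if you curve the slabs to remove the radial error, i.e.\ replace them by $2^{-j}$-neighbourhoods of the spheres $\{|x+t_ie_1|=t_0\}$; that fixes the first problem because $|x+t_i\omega|^2=|x+t_ie_1|^2+2t_i\langle x,\omega-e_1\rangle$.

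Restricting to $|x'|\lesssim\theta$ cures both problems, but it shrinks $|X_i|$ by the factor $2^{-(j-2k)(d-1)}$, which ruins the bound. So your closing remark that only $k\le j$ is used is incorrect.

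\textbf{The missing ingredient is a second, dual example for $k\le j/2$}, the Knapp-type plate example the paper uses in that range. Fix $a\in E\cap I$ and let $f$ be the indicator of the flat plate $\{|y_d-a|\lesssim 2^{-j},\ |y'|\lesssim 2^{k-j}\}$, so $\|f\|_p^p\approx 2^{-j}\theta^{d-1}$ as before. For $t$ in a $2^{-j}$-separated subset of $E\cap I$, test on the curved sets $U(a,t)$. These are the $2^{-j}$-neighbourhoods of the patches $\{|x'|\le 2^{-k}\}$ of the spheres of radius $t$ about $ae_d$, near $(a-t)e_d$. They are disjoint, lie in $\{|x|\lesssim 2^{-k}\}$, and have measure $\approx 2^{-j}2^{-k(d-1)}$.

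For $x\in U(a,t)$ the sphere $x+tS^{d-1}$ passes within $2^{-j}$ of $ae_d$, with normal tilted by $\lesssim 2^{-k}$ from $e_d$. So over horizontal distance $2^{k-j}$ it leaves the plane $y_d=a$ by at most $\lesssim 2^{-k}2^{k-j}+2^{2(k-j)}\lesssim 2^{-j}$; this last step is exactly where $k\le j/2$ is used. Thus $A_tf\gtrsim\theta^{d-1}$ on $U(a,t)$, and your final bookkeeping goes through verbatim.

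The two constructions are complementary. A curved $f$ with flat test sets needs $k\ge j/2$, and a flat $f$ with curved test sets needs $k\le j/2$.
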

\begin{remark}
The case $k=j/2$ in \eqref{eq:lowerbdgoal} corresponds to a standard Knapp-type example which is essentially in \cite{DuoandikoetxeaSeijo}.
\end{remark}

In the proof of Lemma \ref{lem:lowerboundgoal} we may assume that $E\cap I$ is non-empty
since otherwise there is nothing to show. In this case choose an element $a\in E\cap I$.
We further distinguish the cases $0 <k\le j/2$ and $j/2<k\le j$.
We let $\tau_{j} (I)$ be a maximal $2^{-j}$-separated set in $E\cap I$, so that \[\# \tau_{j} (I)\approx N(E\cap I, 2^{-j}). \]

\subsection{\texorpdfstring{Proof of Lemma \ref{lem:lowerboundgoal} for the case $0 < k\le j/2$}{Proof of Lemma \ref{lem:lowerboundgoal} for small k}}
Let ${\varepsilon}>0$ be small and
\[ U (a,t)= \{(x',x_d): (t-{\varepsilon} 2^{-j})^2\le |x'|^2+|x_d-a|^2 \le (t+{\varepsilon} 2^{-j})^2,\, |x'|\le 2^{-k} \} \]
which is a $2^{-j}$-neighborhood of a subset with diameter $O(2^{-k})$ of the $t$-sphere centered at $(0,a)$.
Since $ a \in I $ and $ | I | \leq 2^{-k} $, then for every $ t \in I $ the subset $ U (a,t) $ is contained in a ball centered at the origin of radius $ O ( 2^{-k} )$.

We note that \begin{equation}\label{emptyintersect} U(a,t) \cap U(a, t') =\emptyset \quad \text{ for $t,t'\in I$, $|t-t'| \ge 2^{-j}$.} \end{equation}

Let \[Q(a) = \{ (y',y_d) : |y_d-a|\le {\varepsilon}^{-1}2^{-j}, \, |y'|\le {\varepsilon}^{-1} 2^{k-j} \} .\] Note that $Q(a)$ is comparable to a box with $(d-1)$ long horizontal sides of length $2^{k-j}$ and one short vertical side of length $2^{-j}$ around the point $(0,a)$. Let $f_Q= {{\mathbbm 1}}_{Q(a)}$. We observe \[ \|f_Q\|_p {\lesssim}_{\varepsilon} 2^{(k-j)(d-1)/p} 2^{-j/p} .\]

Given ${\varepsilon}$ be small enough, we will show that for $x\in U(a,t) $
there is a subset $S(x)$ of the sphere of radius $t$ centered at $x$ which is of diameter $\approx {\varepsilon} 2^{k-j}$ and such that $S(x)\subset Q(a)$.

Let $x\in U(a,t)$ and consider the unit vectors
\[ \theta_{x,w'}= \frac{ (-x'+w', a-x_d)}{\sqrt{|x'-w'|^2+(a-x_d)^2}}.\]
Consider the subset $S(x)$ of the unit sphere, defined by \[ S(x) = \{\theta_{x,w'}: |w'|\le 2^{k-j}\} \]
Then the spherical surface measure of $S(x)$ is ${\gtrsim} 2^{(k-j)(d-1)} $ (uniformly in $x\in U(a,t)$, and $1\le t\le 2$).

We claim that for $x\in U(a,t)$
\begin{equation}
\label{eq:inclusion} \{ x+t\theta_{x,w'} :|w'| \le 2^{k-j} \} \subset Q(a)\,.\end{equation}

We let $e_d=(0,\dots,0,1)$ and let $\Pi$ be the map $(x',x_d)\mapsto x'$.
We write
\[ x+t\theta_{x,w'} -a e_d= x+t\theta_{x,0}-a e_d+ t(\theta_{x,w'}-\theta_{x,0})\] and
compute
\[ x+t\theta_{x,0}-a e_d= \frac{ (x', x_d-a)}{\sqrt{|x'|^2 +(a-x_d)^2}} (\sqrt{|x'|^2+(a-x_d)^2}-t)\] which by definition of $U(a,t)$ implies $|x+t\theta_{x,0}-a e_d|\le 2^{-j}$.

Next,
\begin{align*} &\big| \Pi(\theta_{x,w'}-\theta_{x,0})\big|= \Big|\frac{ (-x'+w') \sqrt{|x'|^2+(a-x_d)^2} +x'
\sqrt{|x'-w'|^2+(a-x_d)^2} }{ \sqrt{|x'|^2+(a-x_d)^2} \sqrt{|x'-w'|^2+(a-x_d)^2} }\Big|
\\
&\le \frac{|w'| \sqrt{|x'|^2+(a-x_d)^2} + |x'| \big| \sqrt{|x'|^2+(a-x_d)^2}-
\sqrt{|x'-w'|^2+(a-x_d)^2} \big|}{(a-x_d)^2}
\end{align*}
and clearly $|\Pi(\theta_{x,w'}-\theta_{x,0})|{\lesssim} 2^{k-j}$.
Moreover, using for $b,\tilde b>0$ the identity \[ |b^{-1/2}-\tilde b^{-1/2}|= |b-\tilde b| b^{ -1/2} \tilde b^{-1/2} (b^{1/2}+\tilde b^{1/2})^{-1} \]
we obtain
\begin{align*}
&\big|\inn{\theta_{x,w'}-\theta_{x,0}}{e_d} \big| \\&\qquad=
(a-x_d) \big| (|x'-w'|^2+(a-x_d)^2)^{-\frac12} - (|x'|^2+(a-x_d)^2)^{-\frac 12} \big|
\\
&\qquad \lesssim
\big| |x'-w'|^2 - |x'|^2 \big|
\lesssim
\big( |w'|^2+ 2|x'||w'| \big) \,.
\end{align*}
Now $|x'||w'|{\lesssim} 2^{-k}\cdot 2^{k-j}{\lesssim} 2^{-j}$ and $|w'|^2{\lesssim} 2^{2k-2j}$, and since $k\le j/2$ we also have $|w'|^2{\lesssim} 2^{-j}$. Hence
$|\inn{\theta_{x,w'}-\theta_{x,0}}{e_d} |{\lesssim} 2^{-j}$.
This means (with the appropriate choice of ${\varepsilon}>0$ in the definition of $Q(a)$) that \eqref{eq:inclusion} is proved. This implies
\[ |A_t f(x)| {\gtrsim} 2^{(k-j)(d-1)} \text{ for $x\in U(a,t)$.} \]

Now let \[ U(a)= \bigcup_{t\in \tau_j (I)} U(a,t).\]
Then by \eqref{emptyintersect},
\begin{align*} &\Big(\int_{U(a)} \Big[\sup_{s\in E\cap I} |A_s f_Q(x)|\Big]^p |x|^\alpha dx \Big)^{1/p}
\ge \Big(\sum_{t\in \tau_j(I)} \int_{U(a,t) } |A_t f_Q(x)|^p |x|^\alpha dx\Big )^{1/p}
\\&{\gtrsim} 2^{-k\alpha/p} 2^{(k-j)(d-1)} \Big(\sum_{t\in \tau_j(I)} |U(a,t) |\Big)^{1/p}\\ &{\gtrsim} 2^{-k\alpha/p} 2^{(k-j)(d-1)} N(E\cap I,2^{-j} )^{1/p} 2^{-j/p} 2^{-k(d-1)/p}
\end{align*}
and thus, with $|I|=2^{-k}$
\begin{align*} \frac{\|M_E f_Q\|_{L^p(w_\alpha)}}{\|f_Q\|_{L^p(w_\alpha) } }&{\gtrsim} \frac{ 2^{-k\alpha/p} 2^{(k-j)(d-1)} N(E\cap I,2^{-j} )^{1/p}2^{-j/p} 2^{-k(d-1)/p} }{2^{(k-j)(d-1)/p} 2^{-j/p} }
\\& {\gtrsim} 2^{-j(d-1) (1-\frac 1p)} N(E\cap I, 2^{-j} ) ^{\frac{1}{p} } |I| ^{\frac{\alpha}{p} +(d-1) (\frac 2p-1)} .
\qedhere
\end{align*}

\subsection{\texorpdfstring{Proof of Lemma \ref{lem:lowerboundgoal} for the case $j/2<k\le j$}{Proof of Lemma \ref{lem:lowerboundgoal} for large k}}
Let
\[ {{\mathcal{U}}}(a)= \{ (y',y_d): |y'|\le {\varepsilon}^{-1} 2^{k-j}, \, (a-2^{-j})^2\le|y'|^2 + |y_d|^2 \le (a+2^{-j})^2\}\]
which is a $2^{-j}$-neighborhood of a subset with diameter $O(2^{k-j})$ of the $a$-sphere centered at $(0,0)$. Let $g_{{{\mathcal{U}}}}={{\mathbbm 1}}_{{{\mathcal{U}}}(a)}.$ Then
\[ \|g_{{{\mathcal{U}}}} \|_p {\lesssim}_{\varepsilon} 2^{(k-j)(d-1)/p} 2^{-j/p}. \]

For $t\in I$, let \[ {{\mathcal{Q}}}(a,t)= \{ (x', x_d): {\varepsilon} 2^{-k-1} \le |x'| \le {\varepsilon} 2^{-k},\,|x_d-a+t| \le {\varepsilon} 2^{-j} \} \] which is comparable to a rectangle in $\{x:|x|\approx {\varepsilon} 2^{-k}\}$, with $(d-1)$ long horizontal sides of length $\approx {\varepsilon} 2^{-k}$ and one short vertical side of length ${\varepsilon} 2^{-j}$.
Observe \begin{equation}
\label{emptyintersect2} {{\mathcal{Q}}}(a,t) \cap {{\mathcal{Q}}}(a, t') =\emptyset \text{ for $t,t'\in I$, $|t-t'| \ge 2^{-j}$.}
\end{equation}

We show that
\begin{equation}
\label{eq:lowerbd} A_t g_{{{\mathcal{U}}}} (x){\gtrsim} 2^{(k-j)(d-1)} \,\,\text{ for $x\in {{\mathcal{Q}}}(a,t)$.}
\end{equation}
This is a calculation in
\cite[\S5]{RoosSeeger} which for convenience of the reader we reproduce with our current parameters.

Let $x\in {{\mathcal{Q}}}(a,t)$, ${\omega}=({\omega}',{\omega}_d)\in S^{d-1}$, with $|{\omega}'|\le {\varepsilon} 2^{k-j} $ and ${\omega}_d=\sqrt{1-|{\omega}'|^2}$. We wish to show that
$x+t{\omega} \in {{\mathcal{U}}}(a)$. To see this we compute
\begin{align*}
|x+t{\omega}|^2 &= |x'|^2 + x_d^2 + t^2+ 2t \inn{x'}{{\omega}'} + 2tx_d {\omega}_d \\
& = |x'|^2 + (x_d+t)^2 + 2t \inn{x'}{{\omega}'}+2tx_d (\sqrt{1-|{\omega}'|^2}-1) .
\end{align*}
Since $|x_d + t - a|\le {\varepsilon} 2^{-j} $ and $|x'|^2 \le {\varepsilon}^2 2^{-2k}\le {\varepsilon}^2 2^{-j}$, we get
\[ ||x'|^2 + (x_d+t)^2 - a^2| \le 6 {\varepsilon} 2^{-j}, \]
\[ |2t\inn{x'}{{\omega}'}| \le 4 |x'| |{\omega}'| \le 4 {\varepsilon} 2^{-j}, \] and
\[ |2t x_d (\sqrt{1-|{\omega}'|^2}-1)| \le 2 (|t-a|+{\varepsilon} 2^{-j} ) |{\omega}'|^2 \le 2 (2^{-k}+{\varepsilon} 2^{-j}) {\varepsilon} 2^{k-j} \le 4{\varepsilon}2^{-j}.\]
Hence
\[ ||x+t{\omega}|^2 - a^2| \le 14 {\varepsilon} 2^{-j}, \]
and thus if ${\varepsilon}\le 10^{-2}$ we have $||x+t{\omega} |-a|\le 2^{-j}$.
Also, since $k\ge j/2$, \[|x'+t{\omega}'| \le |x'| + 2 |{\omega}'| \le 2^{-k} + 2^{k-j} \le 2^{k-j+1}\] so that altogether $x+t{\omega} \in {{\mathcal{U}}}(a)$. This establishes
\eqref{eq:lowerbd}.

Now let ${{\mathcal{Q}}}(a) = \cup_{t\in \tau_j(I)} {{\mathcal{Q}}}(a,t)$. Then by \eqref{eq:lowerbd}
\begin{align*} &\Big(\int_{{{\mathcal{Q}}}(a)} \Big[\sup_{s\in E\cap I} |A_s g_{{\mathcal{U}}}(x)|\Big]^p |x|^\alpha dx \Big)^{1/p}
\\&\quad\ge \Big(\sum_{t\in \tau_j (I)} \int_{{{\mathcal{Q}}}(a,t) } |A_t g_{{{\mathcal{U}}}} (x)|^p |x|^\alpha dx\Big )^{1/p}
\\&\quad{\gtrsim} 2^{-k\alpha/p} 2^{(k-j)(d-1)} \Big(\sum_{t\in \tau_j(I)} |{{\mathcal{Q}}}(a,t) |\Big)^{1/p}
\\
&\quad{\gtrsim} 2^{-k\alpha/p} 2^{(k-j)(d-1)} N(E\cap I,2^{-j} )^{1/p} 2^{-j/p} 2^{-k(d-1)/p}
\end{align*}
and
\begin{align*} \frac{\|M_E g_{{{\mathcal{U}}}} \|_{L^p(w_\alpha)}}{\|g_{{{\mathcal{U}}}} \|_{L^p(w_\alpha) } }&{\gtrsim} \frac{ 2^{-k\alpha/p} 2^{(k-j)(d-1)} N(E\cap I,2^{-j} )^{1/p}2^{-j/p} 2^{-k(d-1)/p} }{2^{(k-j)(d-1)/p} 2^{-j/p} }
\\& {\gtrsim} 2^{-j(d-1) (1-\frac 1p)} N(E\cap I, 2^{-j} ) ^{\frac{1}{p} } |I| ^{\frac{\alpha}{p} +(d-1) (\frac 2p-1)} . \qedhere
\end{align*}

\section{Upper bounds}\label{sec:upperbounds}

In this section we prove the upper bounds in Theorem \ref{thm:main}, i.e. that $M_{{\mathcal{E}}}$ is bounded if
$\alpha< (d-1)(p-1)-\beta$ and
\begin{equation}\label{eqn:mainasm}
\nu^\sharp_{{\mathcal{E}}}((d-1)(p-2)-\alpha)< (d-1)(p-1)
\end{equation}
(note here we used the equivalent condition from \eqref{eq:Thetadef}).
We may also assume that $\alpha < 0$
since the case $\alpha\ge 0$ was already handled in \cite{DuoandikoetxeaSeijo}.
For the same reason we may also assume $p\le 1 + \frac{1}{d-1}\le 2$.

As a consequence of the above and \S\ref{sec:localglobal}, it will now suffice to show boundedness of $M_{{{\mathcal{E}}}}$ on $L^p(w_\alpha)$ under the additional assumption ${{\mathcal{E}}}=E\subset [1,2]$. To be precise, we need to prove the following

\begin{proposition}\label{prop:local} Let $p\in [1,2]$ and $\alpha\le 0$.
Assume that $E\subset [1,2]$ is non-empty and there exists $\varepsilon>0$ and a constant $A\ge 1$ such that
\begin{equation}\label{mainassu}
\sup_{0<\delta<1} \sup_{\delta\le |I|\le 1} N(E\cap I, \delta) \delta^{(d-1)(p-1)-\varepsilon}
|I|^{\alpha+(d-1)(2-p)} \le A^p.
\end{equation}
Then $\|M_{E}\|_{L^p(w_\alpha)\to L^p(w_\alpha)} \lesssim A$ with the implicit constant only depending on $p,\alpha,d$.
\end{proposition}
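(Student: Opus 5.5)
The plan is to split space into the region where the weight is essentially constant and the region near the origin. On the near-origin region we localize in frequency and use a single-average estimate interpolated between $L^1$ and $L^2$. The exponents in \eqref{mainassu} are exactly the ones this interpolation produces.

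\emph{Reductions.} Since $E\subset[1,2]$, $A_tf(x)$ only sees values of $f$ on $\{y:\,|\,|y-x|-t\,|=0\}$ with $t\in[1,2]$.
\begin{itemize}
\item Taking $I=[1,2]$ in \eqref{mainassu} gives $N(E,\delta)\lesssim A^p\delta^{-(d-1)(p-1)+\varepsilon}$. So $\beta_E<(d-1)(p-1)$, and by \cite{SeegerWaingerWright1995} (with its quantitative form) $M_E$ is bounded on unweighted $L^p$ with norm $\lesssim A$.
\item For $|x|\ge c$ with $c$ a small constant, the relevant $y$ satisfy $|y|\lesssim |x|+2$ and $|x|\lesssim |y|+2$. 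Decomposing $x$ into dyadic annuli $|x|\approx 2^m$, $m\ge0$, the weights $|x|^\alpha$ and $|y|^\alpha$ are comparable except when $|y|$ is much smaller than $|x|$. In that case $|x|\approx 1$ and $|y|^\alpha\ge |x|^\alpha$ since $\alpha\le0$, which only helps. So this part follows from the unweighted bound.
\end{itemize}
It remains to show, for $k\ge k_0$,
\[
2^{-k\alpha}\int_{|x|\approx 2^{-k}}|M_Ef|^p\,dx\lesssim A^p2^{-k\varepsilon'}\|f\|_p^p ,
\]
where $f$ may be assumed supported in $\{1/2\le|y|\le 3\}$, on which the weight is $\approx1$. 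Summing over $k$ then finishes the proof.

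\emph{Localization for fixed $k$.} Decompose $f=\sum_{j\ge0}P_jf$ by Littlewood--Paley pieces at frequency $2^j$, and lump all $j\le k$ together as a single piece with $j=k$. Cover $[1,2]$ by intervals $I$ of length $2^{-k}$. For $|x|\le 2^{-k}$ and $t\in I$, $A_tf(x)$ only involves $f$ on the shell $\{|y|\in I+[-2^{-k+1},2^{-k+1}]\}$, up to Schwartz tails from $P_j$. These shells overlap boundedly. Hence
\[
\int_{|x|\le2^{-k}}\sup_{t\in E}|A_tP_jf|^p \lesssim \sum_I\int_{|x|\le 2^{-k}}\sup_{t\in E\cap I}|A_tP_j f_I|^p + \text{tails},
\]
with $\sum_I\|f_I\|_p^p\lesssim\|f\|_p^p$.

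For each $I$, replace the supremum over $E\cap I$ by a supremum over a maximal $2^{-j}$-separated subset of $E\cap I$, which has about $N(E\cap I,2^{-j})$ points. This uses the standard Sobolev-in-$t$ argument: $2^{-j}\partial_tA_tP_j$ obeys the same bounds as $A_tP_j$. Then bound the supremum over the net by the $\ell^p$ sum over its points. So everything reduces to a single-average estimate on the ball $B_k=\{|x|\le 2^{-k}\}$, obtained by interpolation:
\begin{itemize}
\item $\|A_tP_j\|_{L^1\to L^1(B_k)}\lesssim 2^{-k(d-1)}$. The kernel has size $2^j$ on a $2^{-j}$-shell, and for fixed $y$ the shell meets $B_k$ in a set of measure $\approx 2^{-j}2^{-k(d-1)}$.
\item $\|A_tP_j\|_{L^2\to L^2}\lesssim 2^{-j(d-1)/2}$, by Fourier decay of $\sigma$.
\item Riesz--Thorin, valid since $1\le p\le2$, gives $\|A_tP_j\|^p_{L^p\to L^p(B_k)}\lesssim 2^{-k(d-1)(2-p)}2^{-j(d-1)(p-1)}$.
\end{itemize}
Multiplying by $N(E\cap I,2^{-j})$ and by $2^{-k\alpha}$, the assumption \eqref{mainassu} with $\delta=2^{-j}$ and $|I|=2^{-k}$ bounds the result by $A^p2^{-j\varepsilon}$. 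Summing over $I$ via the shell disjointness, and over $j\ge k$ by the triangle inequality using the geometric decay, yields the claim with $\varepsilon'=\varepsilon/p$-type decay in $k$.

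\emph{Main obstacle.} The technical core is the localization step. One must justify that the Schwartz tails of $P_j$ and the Sobolev embedding over $t\in I$ are compatible with the shell decomposition adapted to $B_k$, and that the lumped low-frequency part ($j\le k$) satisfies the same $L^1$ bound $2^{-k(d-1)}$ on $B_k$. I would first handle this by replacing $P_j$ with a compactly supported smoothing at scale $2^{-j}$, which makes the shell localization exact. I would then keep the supremum in $t$ via the fundamental theorem of calculus on each $2^{-j}$-interval of the net.
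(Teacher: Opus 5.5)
Your argument is essentially the paper's, with the two dyadic decompositions taken in the opposite order. You fix the spatial scale $|x|\approx 2^{-k}$ and sum over frequencies $j>k$. Each of those pieces is handled as in Lemma~\ref{lem:k<j}: exact shell localization via compactly supported kernels, a $2^{-j}$-net plus the fundamental theorem of calculus, $L^1$--$L^2$ interpolation on $B_k$, and \eqref{mainassu} with $\delta=2^{-j}$, $|I|=2^{-k}$. The paper instead fixes $j$, sums over $k\le j$, and treats $|x|\le 2^{-j}$ separately in Lemma~\ref{lem:k=j}. Your order gives $\sum_k 2^{-k\varepsilon/p}$ directly instead of $\sum_j(1+j)2^{-j\varepsilon/p}$. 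Your far-region reductions match the paper's treatment of $f_{\mathrm{low}}$ and $f_{\mathrm{high}}$.

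\textbf{The lumped piece.} One step is wrong as written: the lumped piece $j\le k$. Its multiplier $\widehat\sigma(t\xi)\chi(2^{-k}\xi)$, with $\chi(0)=1$, is $\approx 1$ near $\xi=0$. So the decay of $\widehat\sigma$ gives no $L^2$ bound $2^{-k(d-1)/2}$, and "treating it as $j=k$" fails on the $L^2$ side. Interpolating your (correct) $L^1\to L^1(B_k)$ bound $2^{-k(d-1)}$ with the true $L^2$ bound $O(1)$ yields only $2^{-k\alpha}\|A_tP_{\le k}\|^p_{L^p\to L^p(B_k)}\lesssim 2^{-k(\alpha+(d-1)(2-p))}$. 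This does not decay for, e.g., $p=2$, $\alpha<0$ and $E$ a single point, a case allowed by \eqref{mainassu}.

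The repair is to interpolate the $L^1\to L^1(B_k)$ bound with the trivial $L^\infty\to L^\infty$ bound instead. This gives $\|A_tP_{\le k}\|^p_{L^p\to L^p(B_k)}\lesssim 2^{-k(d-1)}$, which is exactly the value your formula predicts at $j=k$. A $2^{-k}$-net of $E\cap I$ has $O(1)$ points. So \eqref{mainassu} with $\delta=|I|=2^{-k}$ gives $2^{-k\alpha}N(E\cap I,2^{-k})2^{-k(d-1)}\le A^p2^{-k\varepsilon}$. This is precisely the $p=1$ to $p=\infty$ interpolation with weight $|x|^{1-d}$ in the paper's Lemma~\ref{lem:k=j}.

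\textbf{Vanishing moments.} When you replace $P_j$ by compactly supported kernels $\sigma*\phi_j$, $\phi_0$ must have vanishing moments up to an order $N_0>\frac{d-1}{2}$, as in the paper. Otherwise the $L^2$ decay $2^{-j(d-1)/2}$ for $j>k$ can fail in higher dimensions.
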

\begin{remark}
Note that since $E\subset [1,2]$ here, it makes no difference whether we use the Euclidean metric or the metric \eqref{eqn:metric} in \eqref{mainassu}.
\end{remark}

Proposition \ref{prop:local} completes the proof of Theorem \ref{thm:main}. Since we may
assume $\alpha<0$ and $p\le 2$, \eqref{eqn:mainasm} implies that there exists $\varepsilon>0$ and $A\ge 1$ so that for every $R>0$,
\[ \sup_{0<\delta<1} \sup_{\delta\le |I|\le 1} N({{\mathcal{E}}}_R\cap I, \delta) \delta^{(d-1)(p-1)-\varepsilon}
|I|^{\alpha+(d-1)(2-p)} \le A^p.\]
By Proposition \ref{prop:local} applied to each $E={{\mathcal{E}}}_R$ we then obtain
\[ \|M_{{{\mathcal{E}}}_R}\|_{L^p(w_\alpha)\to L^p(w_\alpha)} \lesssim A, \]
for every $R>0$, which by Lemma \ref{lem:global} implies that $M_{{{\mathcal{E}}}}$ is bounded on $L^p(w_\alpha)$ as required.

\subsection{Proof of Proposition \ref{prop:local}}
The arguments here are close to \cite{DuoandikoetxeaVega,DuoandikoetxeaSeijo}.
We first observe that the assumption \eqref{mainassu} implies
\[ p>p_\beta=1+\tfrac{\beta}{d-1} \]
by considering the case $|I|=1$ in the supremum and also
\[ \alpha>1-d \]
by considering the case $|I|=\delta$ in the supremum. We split
\begin{equation}
\label{eq:lowhigh}f=f_{\mathrm{low}} + f_0 + f_{\mathrm{high}},
\end{equation} where $f_{\mathrm{low}} $ lives on $ \{y:|y|<1/4\}$, $f_0$ lives on $\{y: 1/2\le |y|\le 4\}$, and
$f_{\mathrm{high}} $ lives on $ \{y:|y|>4\}$. Then $M_{E} f_{\mathrm{low}}(x)=0 $ for $|x|> 9/4$ and for $|x|<1/2$.
Thus for $f_{\mathrm{low}}$ the $L^p(w_\alpha)$ bound follows from the unweighted bound. Indeed,
\begin{align}\label{eq:MEflow}\|M_{E} f_{\mathrm{low}}\|_{L^p(w_\alpha)} &{\lesssim}_\alpha
\Big(\int_{\frac 12\le |x|\le \frac 94} |M_{E} f_{\mathrm{low}} (x)|^p dx\Big)^{\frac 1p} \\&\notag{\lesssim} \|f_{\mathrm{low}} \|_p {\lesssim} \|f_{\mathrm{low}} \|_{L^p(w_\alpha)} {\lesssim} \|f \|_{L^p(w_\alpha)},
\end{align}
where we have used $\alpha\le 0$ and $p>p_\beta$.

We split $f_{\mathrm{high}}=\sum_{k=2}^\infty f^k$ where $f^k=f{{\mathbbm 1}}_{2^k<|y|\le 2^{k+1}}$. Let \[{{\mathfrak{A}}}_k =\{x: 2^k-2\le |x|\le 2^{k+1}+2\}. \]
Then $M_{E} f^k(x)=0$ for $x\in {{\mathfrak{A}}}_k^\complement$.
Again, for all $\alpha\in {{\mathbb{R}}}$ we can use the unweighted inequality to estimate
\begin{align} \label{eq:MEfhigh} \|M_{E} f_{\mathrm{high}}\|_{L^p(w_\alpha)} &\le
\Big(\int_{|x|\ge 2} \Big| \sum_{k=2}^\infty {{\mathbbm 1}}_{{{\mathfrak{A}}}_k} (x) M_{E} f^k(x)\Big |^p |x|^\alpha dx \Big)^{\frac 1p}
\\\notag &{\lesssim}
\Big(\sum_{k=2}^\infty 2^{k\alpha} \int |M_{E} f^k(x) |^p dx \Big)^{\frac 1p}
\\ \notag &{\lesssim}
\Big(\sum_{k=2}^\infty 2^{k\alpha} \int | f^k(x) |^p dx \Big)^{\frac 1p} {\lesssim} \|f\|_{L^p(w_\alpha)}.
\end{align}

The estimate for $M_{E} f_0$ is more interesting; only here the weight plays an essential role.
Let $\phi_0\in C^\infty_c$ be supported in $\{x:|x|<1/4\}$ such that $\int \phi_0(x) x^{\iota}\,dx=0$ for all multiindices $\iota\in \mathbb{N}_0^d$ with $1\le \sum_{j=1}^d\iota_j\le N_0$ where $N_0 \gg\frac{d-1}{2}$. For $j\ge 1$ let $\phi_j(x)= 2^{jd}\phi_0(2^jx)- 2^{(j-1)d}\phi_0(2^{j-1} x)$ and set \[K^j(x) =\sigma*\phi_j(x)\] and
$K_t^j(x)= t^{-d} K^j(t^{-1}x)$. We then have the decomposition
$\sigma_t=\sum_{j=0}^\infty K_t^j$ which has the behavior of a dyadic frequency decomposition but with strong spatial localization. Set $A^j_t f= f*K^j_t$.

\begin{lemma}\label{lem:k=j}
Let $\alpha\ge 1-d.$ Then for $j\ge 0$,
\begin{equation}
\label{j-ineq} \Big(\int_{|x|\le 2^{-j} } \sup_{1\le t\le 2} \big |A_t^j f_0(x) \big|^p|x|^\alpha dx\Big)^{\frac 1p}
\lesssim 2^{-j \frac{d-1+\alpha}p} \|f_0\|_p .
\end{equation}
\end{lemma}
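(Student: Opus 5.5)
We will prove \eqref{j-ineq} by bounding the supremum over $t$ pointwise by a fixed $L^1$-type quantity. The weighted integral over the small ball $\{|x|\le 2^{-j}\}$ then factors into the weight integral times an unweighted $L^p$ norm of a convolution. The key fact is that, for $|x|\le 2^{-j}$ and $1\le t\le 2$, the kernel $K^j_t(x-\cdot)$ lives essentially on the $2^{-j}$-neighborhood of the sphere of radius $t$ centered at $x$. All these neighborhoods are contained in a single shell around the origin, namely $\{y: \big||y|-t\big|\lesssim 2^{-j}\}$. Only the radius is free, and $t$ ranges over an interval of length one.

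\emph{Step 1 (kernel bounds).} Since $\phi_j$ is a difference of two $L^1$-normalized bumps at scale $2^{-j}$, we have $|K^j(z)|\lesssim_N 2^{j}(1+2^j\,\mathrm{dist}(z,S^{d-1}))^{-N}$. For $|x|\le 2^{-j}$ and $t\in[1,2]$ this gives
\[
|K^j_t(x-y)|\lesssim_N 2^j\big(1+2^j\,\big||y|-t\big|\big)^{-N},
\]
because $\big||x-y|-|y|\big|\le |x|\le 2^{-j}$. The moment conditions on $\phi_0$ are not needed here; only size bounds are used.

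\emph{Step 2 (pointwise majorization).} For $|x|\le 2^{-j}$ it follows that
\[
\sup_{1\le t\le 2}|A^j_t f_0(x)|\lesssim \sup_{1\le t\le 2}\int |f_0(y)|\,2^j\big(1+2^j\,\big||y|-t\big|\big)^{-N}\,dy=:G_j,
\]
and $G_j$ is independent of $x$. Write $y=r\omega$ in polar coordinates and set $F(r)=\int_{S^{d-1}}|f_0(r\omega)|\,d\sigma(\omega)$, which is supported in $r\in[1/2,4]$. Then
\[
G_j\approx \sup_t\int F(r)\,2^j\big(1+2^j|r-t|\big)^{-N}\,dr.
\]
Applying H\"older in $r$ with the normalized kernel gives $G_j\lesssim 2^{j/p}\|F\|_{L^p(dr)}$: the $L^{p'}$ norm of $2^j(1+2^j|\cdot|)^{-N}$ is $\approx 2^{j(1-1/p')}=2^{j/p}$. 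Since $\sigma$ is a probability measure, H\"older on the sphere gives $F(r)^p\le \int|f_0(r\omega)|^p\,d\sigma(\omega)$. Hence $\|F\|_{L^p(dr)}\lesssim\|f_0\|_p$, the radial measure $r^{d-1}\,dr$ being comparable to $dr$ on $[1/2,4]$.

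\emph{Step 3 (integrate the weight).} Because $\alpha>1-d$ (strictly, or with a logarithmic endpoint issue at $\alpha=1-d$), we have $\int_{|x|\le 2^{-j}}|x|^\alpha\,dx\approx 2^{-j(d+\alpha)}$. Combining the three steps,
\[
\Big(\int_{|x|\le 2^{-j}}\sup_t|A^j_t f_0|^p\,|x|^\alpha\,dx\Big)^{1/p}\lesssim 2^{-j(d+\alpha)/p}\,2^{j/p}\,\|f_0\|_p=2^{-j\frac{d-1+\alpha}{p}}\|f_0\|_p ,
\]
which is \eqref{j-ineq}.

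The main obstacle is the endpoint $\alpha=1-d$, where $|x|^{1-d}$ is not integrable near $0$ when $d+\alpha=1$. Actually $d+\alpha=1>0$, so $|x|^{1-d}$ is still integrable in $\mathbb{R}^d$, and the computation goes through for all $\alpha>-d$, in particular for all $\alpha\ge 1-d$. So no real obstacle remains beyond checking the kernel decay in Step 1. This uses only the spatial localization of $\phi_0$ at scale $2^{-j}$ and the fact that $\sigma*\phi_j$ has mass $\lesssim 1$ in each $2^{-j}$-shell, with height $\approx 2^j$.
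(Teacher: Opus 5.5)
Your proof is correct, and it takes a genuinely different and much more elementary route than the paper.

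\emph{The paper's route.} It splits $f_0$ into pieces supported on annuli of width $2^{-j}$ and restricts $t$ to matching intervals $J_\nu$ of length $\approx 2^{-j}$. It controls $\sup_{t\in J_\nu}$ through the fundamental theorem of calculus in $t$. It then proves separate endpoint bounds:
\begin{itemize}
\item at $\alpha=1-d$, using $\|K^j_t\|_\infty\lesssim 2^j$ and $\int_{|x|\le 2^{-j}}|x|^{1-d}\,dx\lesssim 2^{-j}$ at $p=1$, and $\|K^j_t\|_1=O(1)$ at $p=\infty$;
\item at $\alpha=0$, using $\|K^j_t\|_1$ at $p=1$ and the decay $|\widehat{K^j_1}|\lesssim 2^{-j(d-1)/2}$ at $p=2$.
\end{itemize}
Finally it interpolates with change of measure.

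\emph{Your route.} You note that for $|x|\le 2^{-j}$ the kernel $K^j_t(x-\cdot)$ is dominated, uniformly in $x$ and in $t\in[1,2]$, by $2^j$ times the indicator of the single centered shell $\{y:\,||y|-t|\le 2^{1-j}\}$. In fact $K^j$ is supported in the $2^{-j-1}$-neighborhood of $S^{d-1}$, so there are no tails at all. H\"older then gives $\sup_{|x|\le 2^{-j}}\sup_{1\le t\le 2}|A^j_tf_0(x)|\lesssim 2^{j/p}\|f_0\|_p$, and the weight is integrated exactly.

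\emph{What this buys.} Your argument uses no Fourier decay of $\widehat\sigma$, no moment conditions on $\phi_0$, no bounds on $\partial_tK^j_t$, and no interpolation. It holds for all $1\le p<\infty$ and all $\alpha>-d$, whereas the paper's interpolation is set up for $1\le p\le 2$ and $1-d\le\alpha\le 0$. This works because the ball $\{|x|\le 2^{-j}\}$ is at the uncertainty scale of $K^j$. In Lemma~\ref{lem:k<j}, where $|x|\approx 2^{-k}$ with $k<j$, the shells move with $x$ and the $L^2$ bound is genuinely needed; the paper simply runs the present lemma through the same machinery.

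Your observation also shows that the paper's $\alpha=0$ branch is not needed here. Since $|x|^\alpha\le 2^{-j(\alpha+d-1)}|x|^{1-d}$ on $\{|x|\le 2^{-j}\}$ for $\alpha\ge 1-d$, the $\alpha=1-d$ estimate alone already implies the lemma.

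Two small points about your write-up:
\begin{itemize}
\item Delete the hedge in Step 3 about a ``logarithmic endpoint issue at $\alpha=1-d$''; it is wrong. One has $\int_{|x|\le r}|x|^\alpha\,dx=c_d\,r^{d+\alpha}/(d+\alpha)$ for every $\alpha>-d$, as your last paragraph concedes.
\item The polar-coordinate detour in Step 2 is unnecessary. H\"older in $y$ directly gives $\|K^j_t(x-\cdot)\|_{L^{p'}}\lesssim 2^j\,|\{y:\,||y|-t|\le 2^{1-j}\}|^{1/p'}\lesssim 2^{j/p}$.
\end{itemize}
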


\begin{proof}
For $2^{j-3}\le \nu \le 2^{j+5}$ let ${{\mathbbm 1}}_\nu$ be the characteristic function of the thin annulus ${{\mathcal{R}}}_\nu= \{ y: \big| |y|-\nu 2^{-j} \big |\le 2^{-j} \}$ and $J_\nu=[(\nu-2)2^{-j}, (\nu+3)2^{-j}]$. Let $B_j=\{x:|x|\le 2^{-j}\}$. Then
\begin{equation} \begin{aligned}
&\Big(\int_{B_j}\sup_{1\le t\le 2} |A^j_t f_0(x) |^p |x|^\alpha dx\Big)^{\frac 1p}\\&=
\Big(\int_{B_j}\sup_{1\le t\le 2}\Big |A^j_t [\sum_\nu (f_0{{\mathbbm 1}}_\nu) ](x) \Big |^p |x|^\alpha dx\Big)^{\frac 1p}
\\&{\lesssim}
\Big( \sum_\nu \int_{B_j}\sup_{t\in J_\nu} \big |A^j_t (f_0{{\mathbbm 1}}_\nu) (x) \big|^p |x|^\alpha dx\Big)^{\frac 1p}.
\end{aligned}\end{equation}
Now use the pointwise bound \begin{equation}\label{eq:ftc} \sup_{t\in J_\nu} \big |A^j_t (f_0{{\mathbbm 1}}_\nu) (x) \big |\le |A^j_{\nu2^{-j} }(f_0{{\mathbbm 1}}_\nu)(x)|+\int_{J_\nu} \Big|\tfrac{d}{ds} A^j_s (f_0{{\mathbbm 1}}_\nu) (x) \Big| ds\,.\end{equation}
We interpolate inequalities for $\alpha=1-d$ and for $\alpha=0$.

For $\alpha=1-d $ we have, for $1\le p<\infty$,
\begin{subequations}
\begin{align}
&\Big( \int_{B_j}\Big| A^j_{2^{-j}\nu} (f_0{{\mathbbm 1}}_\nu) (x) \Big |^p |x|^{1-d} dx\Big)^{\frac 1p}{\lesssim} \|f_0{{\mathbbm 1}}_\nu\|_p,
\\
&\Big( \int_{B_j}\Big[\int_{J_\nu} \big|\tfrac d{ds} A^j_{s} (f_0{{\mathbbm 1}}_\nu) (x) \big |ds \Big]^p |x|^{1-d} dx\Big)^{\frac 1p} {\lesssim} \|f_0{{\mathbbm 1}}_\nu\|_p,
\end{align}
\end{subequations}
which follow by interpolation between $p=1$ and the corresponding inequality for $p=\infty$. For $p=1$ we use that $\|K^j_t\|_\infty+ 2^{-j} \|\frac{d}{dt} K^{j}_t\|_\infty = O(2^j) $ and $\int_{B_j}|x|^{1-d} dx=O(2^{-j})$.

For $\alpha=0$ and $1\le p\le 2$ we have
\begin{subequations}
\begin{align}
&\Big( \int_{B_j}\big| A^j_{2^{-j}\nu} [f_0{{\mathbbm 1}}_\nu ](x) \big |^p dx\Big)^{\frac 1p}{\lesssim} 2^{-j(d-1)(1-\frac 1p)} \|f_0{{\mathbbm 1}} _\nu\|_p,
\\
&\Big( \int_{B_j}\Big(\int_{J_\nu} \big|\tfrac d{ds} A^j_{s} [f_0{{\mathbbm 1}}_\nu] (x) \big |ds \Big)^p dx\Big)^{\frac 1p} {\lesssim} 2^{-j(d-1)(1-\frac 1p)}\|f_0{{\mathbbm 1}}_\nu\|_p.
\end{align}
\end{subequations}
Again this follows by interpolation between $p=1$ and $p=2$. For $p=1$ we just use $\|K^j_t\|_1+2^{-j} \|\frac{d}{dt} K^j_t\|_1=O(1)$. For $p=2$ we use $|\widehat \sigma(\xi)|{\lesssim} (1+|\xi|)^{-\frac{d-1}{2}} $ and thus, with $\frac{d-1}{2}\ll N_0\ll N$, \begin{subequations}
\begin{equation}\label{eq:FTcalc} |\widehat{K^j_1} (\xi)|{\lesssim} (2^{-j} |\xi|) ^{N_0 } (1+ (2^{-j}|\xi|))^{-N} (1+|\xi|)^{-\frac{d-1}{2}} {\lesssim} 2^{-j\frac{d-1}{2}}. \end{equation}
We have $\frac{d}{dt} \widehat{K^j_t}(\xi)= \inn{2^{-j}\xi}{\nabla \phi(2^{-j}t\xi)}\widehat \sigma(t\xi) + \phi(2^{-j} t\xi) \inn{\xi}{\nabla \widehat \sigma }(t \xi)$ and obtain
\begin{equation}\label{eq:FTcalcder} \big|\tfrac{d}{dt} \widehat{K^j_t}(\xi) \big | {\lesssim} 2^{-j\frac{d-3}{2}} , \quad \text{ $t\approx 1$.}\end{equation}
\end{subequations}
Since we also integrate over $J_\nu$ the interpolation with change of weight (\cite{SteinWeiss-weights}) together with \eqref{eq:ftc} leads to
\[
\Big( \int_{B_j}\sup_{t\in J_\nu} \big|A^j_t [f_0{{\mathbbm 1}}_\nu ](x) \big|^p |x|^\alpha dx\Big)^{\frac 1p} {\lesssim} 2^{-j\frac{d-1+\alpha}p}\|f_0{{\mathbbm 1}}_\nu\|_p
\] and then also
\[\Big( \int_{B_j}\big|\sup_{ 1/4\le t\le 4}\big |A^j_t f_0 (x) \big|^p |x|^\alpha dx\Big)^{\frac 1p} {\lesssim} 2^{-j\frac{d-1+\alpha}p}\|f\|_p.\qedhere
\]

\end{proof}

\begin{lemma}\label{lem:k<j} Let $E\subset[1,2]$. For $0<k\le j$,
\begin{multline} \label{jk-ineq} \Big(\int_{2^{-k-1}\le |x|\le 2^{-k}} \sup_{t\in E} |A_t^j f_0(x)|^p|x|^\alpha dx\Big)^{\frac 1p} \\
\lesssim 2^{-j\frac{d-1}{p'} } \sup_{\substack{I\subset [1,2] \\ |I|\approx 2^{-k} } }\big[ N(E\cap I,2^{-j} ) 2^{-k((d-1)( 2-p)+\alpha) } \big]^{\frac 1p} \|f_0\|_p\,.\end{multline}
\end{lemma}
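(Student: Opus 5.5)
The plan is to localize in the radius variable at scale $2^{-k}$ and in space at scale $2^{-j}$, and then combine an $L^\infty$ (or $L^1$) bound with an $L^2$ bound for each piece.

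First, the geometry. The kernel $K^j_t$ is concentrated, with rapidly decaying tails, in a $2^{-j}$-neighborhood of the sphere of radius $t$. Fix $x$ with $|x|\approx 2^{-k}$. Then $A^j_t f_0(x)$ only sees $f_0$ on the annulus $\{y: ||y|-t|\lesssim 2^{-k}\}$, up to harmless tails. We therefore cover $[1,2]$ by intervals $I$ of length $\approx 2^{-k}$ and split $f_0=\sum_I f_0\mathbbm 1_{\widetilde I}$, where $\widetilde I$ is the annulus $\{y:|y|\in I^*\}$ with $I^*$ a fixed dilate of $I$. For $t\in E\cap I$ only the pieces with $I^*\ni t$ contribute, and these have bounded overlap. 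Hence
\[\int_{|x|\approx 2^{-k}}\sup_{t\in E}|A^j_tf_0|^p|x|^\alpha\,dx\lesssim 2^{-k\alpha}\sum_I\int_{|x|\approx 2^{-k}}\sup_{t\in E\cap I}|A^j_t(f_0\mathbbm 1_{\widetilde I})|^p\,dx .\]
It therefore suffices to show, for each $I$,
\[\Big\|\sup_{t\in E\cap I}|A^j_t g|\Big\|_{L^p(|x|\approx 2^{-k})}\lesssim \big[2^{-j(d-1)(p-1)}N(E\cap I,2^{-j})2^{-k(d-1)(2-p)}\big]^{1/p}\|g\|_p\]
for $g$ supported in $\widetilde I$. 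Summing the $p$-th powers over $I$ then gives the claim.

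Second, the sup over $t$. Take a maximal $2^{-j}$-separated set $\tau_j(I)\subset E\cap I$, of cardinality $\approx N(E\cap I,2^{-j})$. Control the supremum over each $2^{-j}$-interval by the fundamental theorem of calculus, as in \eqref{eq:ftc}, using that $2^{-j}\frac{d}{dt}K^j_t$ behaves like $K^j_t$. This reduces the task to the single-average estimate
\[\|A^j_t g\|_{L^p(|x|\approx 2^{-k})}^p\lesssim 2^{-j(d-1)(p-1)}2^{-k(d-1)(2-p)}\|g\|_p^p,\]
uniformly in $t\approx1$, with the same bound for $2^{-j}\frac{d}{dt}A^j_t$ integrated over $2^{-j}$-intervals. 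Summing over $t\in\tau_j(I)$ produces the factor $N(E\cap I,2^{-j})$.

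Third, the single-average estimate, obtained by interpolating between $p=1$ and $p=2$. At $p=2$, \eqref{eq:FTcalc} and \eqref{eq:FTcalcder} give $\|A^j_tg\|_2\lesssim 2^{-j(d-1)/2}\|g\|_2$, and the exponent $-j(d-1)(p-1)-k(d-1)(2-p)$ at $p=2$ is indeed $-j(d-1)$. At $p=1$ we need $\|A^j_tg\|_{L^1(|x|\approx 2^{-k})}\lesssim 2^{-k(d-1)}\|g\|_1$. Equivalently, for fixed $y$,
\[\int_{|x|\approx 2^{-k}}|K^j_t(x-y)|\,dx\lesssim 2^{-k(d-1)} .\]
This holds because the region $\{|x|\lesssim 2^{-k}\}$ meets the $2^{-j}$-shell of radius $t$ around $y$ in a cap of diameter $\lesssim 2^{-k}$ and thickness $2^{-j}$. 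That cap has measure $\lesssim 2^{-k(d-1)}2^{-j}$, and $|K^j_t|\lesssim 2^j$ there, with Schwartz tails outside. Riesz–Thorin between these two endpoints gives the stated power; the exponent is affine in $1/p$ and matches at both ends. The derivative term is handled identically.

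The main obstacle is the localization in the first step. The kernels $K^j_t$ are not compactly supported on a $2^{-j}$-shell, so the splitting into the pieces $f_0\mathbbm 1_{\widetilde I}$ is exact only up to tails. For $t\in I$ and $y$ at distance $\gtrsim 2^{-k}$ from the relevant annulus, the kernel is $O(2^{j}(2^j2^{-k})^{-N})$. I would absorb these tail contributions crudely into the $p=1$ bound, using $k\le j$ so that they carry extra decay $2^{-(j-k)N}$. The same bookkeeping is needed when passing from the supremum over $E\cap I$ to the $2^{-j}$-net.
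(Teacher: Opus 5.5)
Your proposal is correct and follows essentially the paper's proof. You localize $f_0$ to annuli of width $\approx 2^{-k}$ and control the supremum over $E\cap I$ by a $2^{-j}$-net plus the fundamental theorem of calculus. You then interpolate the single-average bound between $p=1$ (a cap of measure $2^{-k(d-1)}2^{-j}$ times $\|K^j_t\|_\infty\lesssim 2^j$) and $p=2$ (the $2^{-j(d-1)/2}$ operator norm).

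The ``main obstacle'' you identify does not arise. Since $\phi_0$ is supported in $\{|x|<1/4\}$, each $\phi_j$ is supported in $\{|x|<2^{-j-1}\}$, so $K^j_t$ vanishes outside the $2^{-j}$-neighborhood of the sphere of radius $t$. Hence the identity $A^j_t f_0(x)=A^j_t(f_0\mathbbm 1_{\widetilde I})(x)$ for $|x|\le 2^{-k}$, $t\in I$ is exact, as in the paper, and there are no tails to absorb.
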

\begin{proof}

We let $I_{k,\mu} =[ 2^{-k} (\mu-1) , 2^{-k}( \mu+2)].$ For each $\mu$ with $E\cap I_{k,\mu} \neq \emptyset$, denote by $\Gamma_{k,\mu,j}$ a collection of intervals of length $2^{-j}$ covering $E\cap I_{k,\mu}$ such that $N(E\cap I_{k,\mu} , 2^{-j}) \approx \# \Gamma_{k,\mu,j} $. If $J\in \Gamma_{k,\mu,j}$ we let $t_J$ be the center of $J$.

We first prove that for $1\le p\le 2$, $k<j$, $B_k=\{x:|x|\le 2^{-k}\}$,
\begin{multline}\label{eq:mainunweighted}
\Big( \int_{B_k\setminus B_{k+1} } \sup_{t\in E\cap I_{k,\mu}} |A_t^j f_0 |^p dx\Big)^{\frac 1p}\\ {\lesssim} 2^{-j(d-1)(1-\frac 1p)} 2^{-k(d-1)(\frac 2p-1)} N(E\cap I_{k,\mu} , 2^{-j}) ^{\frac 1p} \|f_0\|_p.
\end{multline}
We use the pointwise estimate
\begin{multline} \label{eq:ptwest} \sup_{t\in E\cap I_{k,\mu}} |A_t^j f_0(x) |
\\ \le \Big(\sum_{J\in {\Gamma}_{k,\mu,j}} |A_{t_J}^j f_0(x) |^p \Big)^{\frac 1p}
+ \Big(\sum_{J\in {\Gamma}_{k,\mu,j} }\Big(\int_{|s|\le 2^{-j}} |\tfrac{d}{ds}A_{t_J+s}^j f_0(x) |ds \Big) ^p \Big)^{\frac 1p}.
\end{multline}
By the integral Minkowski inequality we need to show
\begin{subequations}
\begin{multline}\label{eq:jkmu-1}
\Big( \sum_{J\in {\Gamma}_{k,\mu,j}} \int_{B_k\setminus B_{k+1} } |A_{t_J} ^j f_0 |^p dx\Big)^{\frac 1p}
\\ {\lesssim} 2^{-j(d-1)(1-\frac 1p)} 2^{-k(d-1)(\frac 2p-1)} N(E\cap I_{k,\mu} , 2^{-j}) ^{\frac 1p} \|f_0\|_p
\end{multline}
and
\begin{multline} \label{eq:jkmu-2}
2^{-j} \sup_{|s|\le 2^{-j}}
\Big( \sum_{J\in {\Gamma}_{k,\mu,j}} \int_{B_k\setminus B_{k+1} } |\tfrac d{ds} A_{t_J+s} ^j f_0 |^p dx\Big)^{\frac 1p}
\\ {\lesssim} 2^{-j(d-1)(1-\frac 1p)} 2^{-k(d-1)(\frac 2p-1)} N(E\cap I_{k,\mu} , 2^{-j}) ^{\frac 1p} \|f_0\|_p\,.
\end{multline}
\end{subequations}

We give the proof of \eqref{eq:jkmu-1} and omit the completely analogous proof of \eqref{eq:jkmu-2}.
These inequalities follow from the cases $p=1$ and $p=2$ by interpolation. For $p=1$ we estimate
\begin{align*} \int_{B_k} |A_{t_J} ^j f_0(x) | dx \le \int |f_0(y)| \int_{B_k} |K_{t_J}^j (x-y)|dx \, dy{\lesssim} 2^{-k(d-1)} \|f_0\|_1
\end{align*}
where we have used that $\{x\in B_k: ||x-y|-t_J|{\lesssim} 2^{-j} \}$ has measure $O(2^{-k(d-1)} 2^{-j})$ and $\|K_t^j\|_\infty =O(2^{j})$ for $1\le t\le 2$. \eqref{eq:jkmu-1} follows for $p=1$ by summing over the intervals in ${\Gamma}_{k,\mu,j}$. For $p=2$
we just use $\|A^j_{t_J}\|_{L^2\to L^2}=O( 2^{-j(d-1)/2} )$ (see \eqref{eq:FTcalc}) and \eqref{eq:jkmu-1} for $p=2$ follows.
Analogous arguments apply to \eqref{eq:jkmu-2} and thus one gets
\eqref{eq:mainunweighted}.

Let \[f_{k,\mu}(y)=f_0(y) {{\mathbbm 1}}_{2^{-k}\mu\le |y|\le 2^{-k}(\mu+1)} (y)\] and observe that \[ A^j_t f(x) =A^j_t\Big[ \sum_{m=\mu-2}^{\mu+2} f_{k,m} \Big ] (x) \quad \text{ for $|x|\le 2^{-k}$ and $t\in I_{k,\mu} $.}
\]
Moreover $w_\alpha(x)\approx 2^{-k\alpha}$ for $x\in B_k\setminus B_{k+1}$. Hence,
\begin{align*}
&\Big(\int_{B_k\setminus B_{k+1}} \sup_{t\in E} |A_t^j f_0(x)|^p|x|^\alpha dx\Big)^{\frac 1p} \\
&{\lesssim} 2^{-k\frac{\alpha}{p}} \Big( \int_{B_k} \sum_{\mu} \sup_{t\in E\cap I_{k,\mu} } \Big| A^j_t \big [\sum_{m=\mu-2}^{\mu+2} f_{k,m} \big] (x) \Big|^p dx\Big)^{\frac 1p}
\\
&{\lesssim} 2^{-k\frac{\alpha}{p}} 2^{-j(d-1)(1-\frac 1p)} 2^{-k(d-1)(\frac 2p-1)} \sup_\mu N(E\cap I_{k,\mu} , 2^{-j} )^{\frac 1p} \Big(\sum_\mu \Big\|\sum_{m=\mu-2}^{\mu+2} f_{k,m}\Big\|_p^p\Big)^{\frac 1p}
\\
&{\lesssim} 2^{-j(d-1)(1-\frac 1p)} 2^{-k(\frac{\alpha}{p}+(d-1)(\frac 2p-1))} \sup_\mu N(E\cap I_{k,\mu} , 2^{-j} )^{\frac 1p} \|f\|_p
\end{align*}
and the lemma is proved.
\end{proof}

To conclude the proof of Proposition \ref{prop:local} we set \[M^j_{E}f(x)= \sup_{t\in E} |A_t^j f(x)|. \]
By Lemma \ref{lem:k=j} and Lemma \ref{lem:k<j},
\begin{multline*}
\|M^j_{E} f_0\|_{L^p(w_\alpha)} \\
\begin{aligned} &{\lesssim} 2^{-j(d-1)(1-\frac 1p)} \sum_{k=0}^j\sup_{|I|=2^{-k}} N(E\cap I, 2^{-j})^{\frac 1p} 2^{-k((d-1)(\frac 2p-1)+\frac \alpha p)} \|f_0\|_p
\\&{\lesssim} (1+j) 2^{-j\frac{\varepsilon} p} A \|f\|_{L^p(w_\alpha)}
\end{aligned}
\end{multline*}
where in the last estimate we have used the main hypothesis \eqref{mainassu}.
Finally, we may sum in $j$ and then combine the resulting estimate for $M_{E}f_0$ with the estimates
\eqref{eq:MEflow} for $M_{E} f_{\mathrm{low}}$ and \eqref{eq:MEfhigh} for $M_{E} f_{\mathrm{high}}$ to complete the proof of Proposition \ref{prop:local}. \qed

\end{document}